\documentclass[final, 12pt]{msml2022} 

\title{Error Estimates for the Deep Ritz Method with Boundary Penalty}
\usepackage{times}
\usepackage{thm-restate}

\usepackage{mathtools}



\newcommand{\jm}[1]{\textcolor{black}{#1}}
\definecolor{dkgold}{rgb}{0.639, 0.53, 0.19}
\newcommand{\mz}{\color{black}}
\newcommand{\eee}{\color{black}}

\msmlauthor{%
 \Name{Johannes M\"uller} \Email{jmueller@mis.mpg.de}\\
 \addr Max Planck Institute for Mathematics in the Sciences, Leipzig, Germany
 \AND
 \Name{Marius Zeinhofer} \Email{mariusz@simula.no}\\
 \addr Simula Research Laboratory, Oslo, Norway%
}

\makeatletter
 \let\Ginclude@graphics\@org@Ginclude@graphics
\makeatother

\begin{document}

\maketitle

\begin{abstract}%
  We estimate the error of the Deep Ritz Method for linear elliptic equations. For Dirichlet boundary conditions, we estimate the error when the boundary values are imposed through the boundary penalty method. Our results apply to arbitrary sets of ansatz functions and estimate the error in dependence of the optimization accuracy, the approximation capabilities of the ansatz class and -- in the case of Dirichlet boundary values -- the penalization strength $\lambda$. To the best of our knowledge, our results are presently the only ones in the literature that treat the case of Dirichlet boundary conditions in full generality, i.e., without a lower order term that leads to coercivity on all of $H^1(\Omega)$. Further, we discuss the implications of our results for ansatz classes which are given through ReLU networks and the relation to existing estimates for finite element functions. For high dimensional problems our results show that the favourable approximation capabilities of neural networks for smooth functions are inherited by the Deep Ritz Method.
\end{abstract}

\begin{keywords}%
 Variational Problems, Deep Ritz Method, Boundary Penalty, High Dimensional PDEs
\end{keywords}

\section{Introduction}\label{sec:introduction}
Utilizing artificial neural networks as ansatz classes for the solution of PDEs has recently gained interest due to its potential for parametric families of PDEs \cite{li2020fourier}, inverse or data enhanced problems \cite{zhang2018deep, zhu2019physics} and the solution of PDEs in high spatial dimensions \cite{han2018solving, han2017deep, jentzen2018proof}. Popular approaches are physics informed neural networks \cite{raissi2019physics}, neural operator methods \cite{li2020fourier} and the Deep Ritz Method \cite{weinan2018deep}. 

Classical numerical schemes for PDEs, such as FEM or FD, are usually designed to solve one PDE instance at a time. In contrast, neural network based approaches allow to simultaneously solve parametric problems. Examples range from parametric geometries to simultaneously solving equations for different material properties or boundary conditions. This allows for a very effective design space exploration and is of great benefit in engineering applications \cite{hennigh2021nvidia}. 

Further, neural network based approaches to inverse problems, i.e., when certain parameters of the PDE are to be inferred from measured data, have shown promising results recently \cite{hennigh2021nvidia, raissi2019physics}. For an effective software realization of the solution of the aforementioned problems we refer to Nvidia Modulus \cite{hennigh2021nvidia}, a software library built on TensorFlow \cite{abadi2016tensorflow} to enable seamless integration in engineering workflows.

Another surge of interest for neural network ansatz classes for the solution of PDEs stems from their ability to approximate high dimensional functions effectively, possibly breaking the curse of dimensionality \cite{weinan2019barron, wojtowytsch2020some, jentzen2018proof}. Consequently, a line of research focuses on the capabilities of neural networks to approximate solutions of high dimensional PDEs breaking or mitigating the curse of dimensionality.

\subsection{Main Contribution and Related Work}
In this manuscript, we investigate the error of the Deep Ritz Method for linear elliptic equations in terms of the optimization quality, the expressiveness of the ansatz class and -- in the case of Dirichlet boundary values -- the penalty parameter. The class of linear elliptic PDEs is pervasive and estimating the error of the Deep Ritz Method for these PDEs constitutes an important first step before considering more complex situations, such as inverse and parametric problems. 

As a model problem we consider the equation posed on $\Omega\subset\mathbb{R}^d$
\begin{align}\label{eq:divagrad}
    \begin{split}
        -\operatorname{div}(A\nabla u) & = f \quad \text{in } \Omega \\
        u & = 0 \quad \text{on } \partial\Omega,
    \end{split}
\end{align}
where $A\in L^\infty(\Omega,\mathbb{R}^{d\times d})$ is symmetric and elliptic and $f\in L^2(\Omega)$, although our method is general enough to handle a larger class of problems. In particular, our analysis covers also non-essential boundary value problems. Using a penalty parameter $\lambda >0$ and neural network functions $u_\theta$, where $\theta\in\Theta$ denotes the parameters of the network, the Deep Ritz formulation of \eqref{eq:divagrad} is the problem of (approximately) finding
\begin{equation}\label{eq:deep_ritz}
    \theta^* \in \underset{\theta \in \Theta}{\operatorname{argmin}}\left[
    \frac12\int_\Omega A\nabla u_\theta\cdot\nabla u_\theta\mathrm dx + \lambda \int_{\partial\Omega}u^2_\theta\mathrm ds - \int_\Omega fu_\theta\mathrm dx \right].
\end{equation}
There are recent results in the literature that estimate the error of the Deep Ritz Method for elliptic equations in certain cases \cite{xu2020finite, jiao2021error, duan2021analysis}. Common in all of them is that they circumvent a major technical difficulty by not considering Dirichlet boundary conditions for general elliptic operators. Usually, Dirichlet boundary conditions are imposed by the boundary penalty method, compare to the formulation \eqref{eq:deep_ritz} and \cite{weinan2018deep}. The works \cite{xu2020finite, jiao2021error, duan2021analysis} consider Dirichlet boundary conditions in the Deep Ritz Method via penalization only in the setting of a coercive operator on all of $H^1(\Omega)$ \footnote{Note that this excludes the prototypical Poisson problem $-\Delta u = f$ with $u_{|\partial\Omega}=0$.}. Thus, the effect of the boundary penalty in the manuscripts \cite{xu2020finite, jiao2021error, duan2021analysis} can immediately be deduced by the results from the classical paper of \cite{maury2009numerical}. One of our main contributions (see Theorem~\ref{thm:main}) is to  extend the results of \cite{maury2009numerical} and to relax the coercivity assumption on the operator in the boundary penalty method, allowing operators that are coercive only on the space $H^1_0(\Omega)$. To the best of our knowledge, our result is presently the only one available in the literature that includes this case. 

We then discuss the error decay rates for the boundary penalty method in dependence of the ansatz class and find that even for optimal penalization they might differ from the approximation rate (see Section~\ref{sec:rates}, Theorem~\ref{thm:rates}). Finally note that in this manuscript, we do not take the numerical evaluations of the appearing integrals into account, however, this can be done exactly as in the paper of \cite{mishra2020estimates}. 

With respect to high dimensional PDEs our results reveal -- when combined with suitable approximation results such as \cite{guhring2019error}, see also Theorem \ref{thm:quantUA} -- that solving elliptic PDEs by the Deep Ritz Method retains the favorable approximation capabilities of neural networks for smooth (in the sense of Sobolev regularity), high dimensional functions. More precisely, the mitigation of the curse of dimensionality through sufficient smoothness of the approximating function can be transferred, also in the case of the boundary penalty method for Dirichlet boundary conditions, compare to Remark \ref{rem:smoothness} and \ref{rem:adaptionToSmoothness2}. We stress that our results do not show a dimension independent error decay. However, this cannot be expected in this generality as we do not assume any special regularity (such as Barron regularity) or lower dimensionality conditions on the data of the PDEs. Note, that also in the important works of \cite{jentzen2018proof} on breaking the curse of dimensionality for certain parabolic PDEs always low-dimensionality assumptions on the initial data and the PDE coefficients are present. In contrast, our results reveal that sufficient (dimension dependent) smoothness can serve as such an assumption for the application of the Deep Ritz Method to elliptic equations. 

Elliptic Equations in high spatial dimensions are important as building blocks for parabolic equations which have well known applications in high dimensions, for instance in finance \cite{han2017deep}. For example, using a minimizing movement scheme, an elliptic problem (with an additional term corresponding to dissipation) needs to be solved for every time step \cite{hwang2021deep} in order to solve a parabolic equation. Further, elliptic equations arise naturally as equilibrium states for parabolic equations.

\section{Preliminaries}\label{sec:preliminaries}

In this section we give an overview of the notations and concepts that we use in the remainder. 

\subsection{Sobolev spaces and Friedrich's inequality}

We denote the space of functions on \(\Omega\subseteq\mathbb R^d\) that are integrable in $p$-th power by \(L^p(\Omega)\) and endow it with its canonical norm. We denote the subspace of \(L^p(\Omega)\) of functions with weak derivatives up to order \(k\) in $L^p(\Omega)$ by \(W^{k,p}(\Omega)\), which is a Banach space with the norm
\[ \lVert u\rVert_{W^{k,p}(\Omega)}^p \coloneqq \sum_{l=0}^k\lVert D^{l} u \rVert_{L^p(\Omega)}^p. \]
This space is called a \emph{Sobolev space} and we denote its dual by $W^{k,p}(\Omega)^\ast$. The closure of all compactly supported smooth functions \(C_c^\infty(\Omega)\) in \(W^{k,p}(\Omega)\) is denoted by \(W^{k,p}_0(\Omega)\).
It is well known that for Lipschitz domains $\Omega$, the operator restricting a Lipschitz continuous function defined on \(\overline{\Omega}\) to the boundary admits a linear and bounded extension $\operatorname{tr}\colon W^{1,p}(\Omega)\to L^p(\partial\Omega)$, called the \emph{trace operator}.
Further, we write \(\lVert u\rVert_{L^p(\partial\Omega)}\) whenever we mean \(\lVert \operatorname{tr}(u)\rVert_{L^p(\partial\Omega)}\). In the following we mostly work with the case $p=2$ and write $H^k_{(0)}(\Omega)$ instead of $W^{k,2}_{(0)}(\Omega)$.

Our analysis of the boundary penalty method relies on the Friedrich inequality which states that the \(L^2(\Omega)\) norm of a function can be estimated by the norm of its gradient and boundary values. 

\begin{proposition}[Friedrich's inequality]\label{poinclemma}
    Let \(\Omega\subseteq\mathbb R^d\) be a bounded and open set with Lipschitz boundary \(\partial\Omega\). Then there exists a constant \(c_{F}>0\) such that
    \begin{equation}\label{Poincarefinal}
    \lVert u\rVert_{H^1(\Omega)} \le c_{F}^2 \cdot \left(\lVert \nabla u\rVert_{L^2(\Omega)}^2 + \lVert u\rVert_{L^2(\partial\Omega)}^2\right) \quad \text{for all } u\in H^1(\Omega).
    \end{equation}
\end{proposition}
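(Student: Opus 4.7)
The plan is a standard compactness-and-contradiction argument, reading the inequality in its evidently intended form
\[
\lVert u\rVert_{H^1(\Omega)}^2 \le c_F^2\bigl(\lVert\nabla u\rVert_{L^2(\Omega)}^2 + \lVert u\rVert_{L^2(\partial\Omega)}^2\bigr).
\]
Suppose no such constant exists. Then one can pick a sequence $(u_n)\subset H^1(\Omega)$ with $\lVert u_n\rVert_{H^1(\Omega)} = 1$ and $\lVert \nabla u_n\rVert_{L^2(\Omega)}^2 + \lVert u_n\rVert_{L^2(\partial\Omega)}^2 < 1/n$. The task is then to extract a subsequential limit and derive a contradiction with the normalization.

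Because $\Omega$ is a bounded Lipschitz domain, the Rellich--Kondrachov theorem provides a compact embedding $H^1(\Omega)\hookrightarrow L^2(\Omega)$, so a subsequence (not relabeled) converges to some $u$ in $L^2(\Omega)$. Combined with $\lVert\nabla u_n\rVert_{L^2(\Omega)}\to 0$, this forces $(u_n)$ to be Cauchy in $H^1(\Omega)$ and hence to converge to $u\in H^1(\Omega)$ with $\nabla u = 0$ almost everywhere. Consequently $u$ is locally constant on $\Omega$, i.e.\ constant on each connected component.

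Continuity of the trace operator $\operatorname{tr}\colon H^1(\Omega)\to L^2(\partial\Omega)$ gives $\lVert u\rVert_{L^2(\partial\Omega)} = \lim_n \lVert u_n\rVert_{L^2(\partial\Omega)} = 0$. Since a bounded Lipschitz open set has only finitely many connected components, each possessing a boundary piece of positive surface measure, and since the trace of a constant function equals that constant on the corresponding boundary piece, the constant on every component must vanish. Thus $u = 0$ in $H^1(\Omega)$, contradicting $\lVert u\rVert_{H^1(\Omega)} = \lim_n \lVert u_n\rVert_{H^1(\Omega)} = 1$.

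The main obstacle is essentially notational rather than conceptual: one must be careful to treat the connected components of $\Omega$ separately, since $\nabla u = 0$ only yields constancy component-wise, and verify that each component contributes to the boundary so that zero trace implies zero constant. Both properties are consequences of the bounded Lipschitz assumption, which is also exactly what is required to invoke Rellich--Kondrachov and the continuity of the trace; no further regularity is needed.
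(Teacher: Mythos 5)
Your proof is correct. Note first that the paper itself gives no proof of this proposition -- it is quoted as a standard fact -- so there is nothing to compare against; your compactness-and-contradiction argument is the textbook route and it goes through. Two points you handle well deserve to be made explicit. First, the inequality as printed in the paper cannot hold literally (the left-hand side scales linearly in \(u\) while the right-hand side scales quadratically), and your reading with \(\lVert u\rVert_{H^1(\Omega)}^2\) on the left is the intended one, consistent with the paper's subsequent remark that \(\bigl(\lVert\nabla u\rVert_{L^2(\Omega)}^2+\lVert u\rVert_{L^2(\partial\Omega)}^2\bigr)^{1/2}\) is an equivalent norm. Second, the component-wise issue is real and you resolve it correctly: the Lipschitz graph condition forces \(\Omega\cap U\) to be connected for small neighbourhoods \(U\) of boundary points, which by compactness of \(\partial\Omega\) bounds the number of components, guarantees each component meets \(\partial\Omega\) in a set of positive surface measure, and ensures the trace on that piece of a function constant on the component equals that constant. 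Together with Rellich--Kondrachov and continuity of the trace (both valid on bounded Lipschitz domains), the contradiction with \(\lVert u\rVert_{H^1(\Omega)}=1\) is complete. The only feature worth flagging is that the argument is non-constructive, so it yields existence of \(c_F\) but no quantitative control on it; that is all the proposition claims.
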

In particular this implies that $(\lVert \nabla u\rVert_{L^2(\Omega)}^2 + \lVert u\rVert_{L^2(\partial\Omega)}^2)^{1/2}$ defines an equivalent norm on $H^1(\Omega)$. We refer to the optimal constant \(c_{F}\) as the \emph{Friedrich constant} of \(\Omega\).

\subsection{Preliminaries from neural network theory}

Consider natural numbers \(d, m, L, N_0, \dots, N_L\) and let $\theta = \left((A_1, b_1), \dots, (A_L, b_L)\right)$ be a tupel of matrix-vector pairs where \(A_l\in\mathbb R^{N_{l}\times N_{l-1}}, b_l\in\mathbb R^{N_l}\) and \(N_0 = d, N_L = m\). Every matrix vector pair \((A_l, b_l)\) induces an affine linear map \(T_l\colon \mathbb R^{N_{l-1}} \to\mathbb R^{N_l}\). The \emph{neural network function with parameters} \(\theta\) and with respect to some \emph{activation function} \(\rho\colon\mathbb R\to\mathbb R\) is the function
\[u^\rho_\theta\colon\mathbb R^d\to\mathbb R^m, \quad x\mapsto T_L(\rho(T_{L-1}(\rho(\cdots \rho(T_1(x)))))).\]
We refer to the parametric family of functions
\(\mathcal F^\rho_{\Theta} \coloneqq \left\{ u^\rho_\theta\mid\theta\in\Theta\right\} \)
as a \(\rho\)-network, where \(\Theta\) denotes a family of parameters and drop the superscript \(\rho\) from now on. If we have \(f=u_\theta\) for some \(\theta\in\Theta\) we say the function \(f\) can be \emph{expressed} by the neural network. We call \(d\) the \emph{input} and \(m\) the \emph{output dimension}, \(L\) the \emph{depth} and \(W=\max_{l=0, \dots, L} N_l\) the \emph{width} of the neural network. We call a network \emph{shallow} if it has depth \(2\) and \emph{deep} otherwise. The \emph{number of parameters} and the \emph{number of neurons} of such a network is given by \(\operatorname{dim}(\Theta)\) and \(\sum_{l=0}^L N_l\). In the remainder, we restrict ourselves to the case \(m=1\) since we only consider real valued functions.

The \emph{rectified linear unit} or \emph{ReLU activation function} is defined via \(x\mapsto \max\left\{ 0, x\right\}\) and we call networks with this particular activation function \emph{ReLU networks}. The class of ReLU networks coincides with the class of continuous and piecewise linear functions, which yields the following result \cite{arora2016understanding, dondl2021uniform}.

\begin{theorem}[Universal approximation with zero boundary values]\label{UniversalApproximation}
Consider an open set \(\Omega\subseteq\mathbb R^d\) and \(u\in W^{1,p}_0(\Omega)\) with $p\in[1,\infty)$. Then for all \(\varepsilon>0\) there exists \(u_\varepsilon\in W^{1,p}_0(\Omega)\) that can be expressed by a ReLU network of depth \(\lceil \log_2(d+1)\rceil +1\) such that $\left\lVert u - u_\varepsilon \right\rVert_{W^{1,p}(\Omega)}\le \varepsilon$.
\end{theorem}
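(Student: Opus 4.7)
The plan is to reduce the statement to the well-known equivalence between ReLU networks and continuous piecewise linear (CPWL) functions, taking care that the approximant can be chosen to have compact support inside $\Omega$ so that it lies in $W^{1,p}_0(\Omega)$. The strategy consists of three steps: first approximate $u$ by a smooth function with compact support in $\Omega$; second approximate that smooth function by a CPWL function whose support is still compactly contained in $\Omega$; third invoke the Arora et al.\ representation theorem to realise the CPWL function as a ReLU network of the claimed depth.

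For step one, by the very definition of $W^{1,p}_0(\Omega)$, the space $C_c^\infty(\Omega)$ is dense, so I can pick $v\in C_c^\infty(\Omega)$ with $\lVert u-v\rVert_{W^{1,p}(\Omega)}\le \varepsilon/2$. Let $K\coloneqq\operatorname{supp}(v)$, which is compact in $\Omega$, and fix an open set $U$ with $K\subset U\Subset\Omega$. For step two, I triangulate a bounded open neighbourhood $U'$ of $U$ with $\overline{U'}\subset\Omega$ by a regular simplicial mesh of diameter $h$, extend $v$ by zero outside $U'$, and form the standard nodal Lagrange interpolant $v_h$. Since $v$ vanishes in a neighbourhood of $\partial U'$, for $h$ small enough $v_h$ also vanishes in a neighbourhood of $\partial U'$, so extending $v_h$ by zero to $\mathbb R^d$ yields a CPWL function with compact support contained in $\Omega$; in particular its restriction to $\Omega$ belongs to $W^{1,p}_0(\Omega)$. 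Standard finite element interpolation estimates applied to $v\in C_c^\infty$ give $\lVert v-v_h\rVert_{W^{1,p}(\Omega)}\le C h\lVert v\rVert_{W^{2,p}}$, so $h$ can be chosen to make this bound at most $\varepsilon/2$.

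For step three, I invoke the result of Arora et al.\ that every CPWL function $\mathbb R^d\to\mathbb R$ admits an exact representation as a ReLU network of depth at most $\lceil\log_2(d+1)\rceil+1$. The underlying reason is that every CPWL function on $\mathbb R^d$ can be written as a max-of-min of at most $d+1$ affine pieces per inner term, that $\max(a,b)=\operatorname{ReLU}(a-b)+b$ and $\min(a,b)=a+b-\max(a,b)$ are expressible by depth-$2$ ReLU networks, and that iterated binary max/min reductions over $d+1$ affine arguments add $\lceil\log_2(d+1)\rceil$ layers. Applying this to $v_h$ yields $u_\varepsilon\coloneqq v_h$ of the required depth, and combining the two approximation bounds gives $\lVert u-u_\varepsilon\rVert_{W^{1,p}(\Omega)}\le\varepsilon$.

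The only non-routine point is the interplay between the compact support requirement (needed to stay in $W^{1,p}_0(\Omega)$) and the exact depth bound: in the interpolation step I must not triangulate $\Omega$ itself but a strictly interior neighbourhood of $\operatorname{supp}(v)$, otherwise boundary simplices intersecting $\partial\Omega$ could carry nodes where $v_h$ fails to vanish, breaking the zero trace property. Once this is respected, the depth bound is purely an algebraic fact about CPWL functions on $\mathbb R^d$ and does not depend on $\Omega$, $p$, or the chosen triangulation.
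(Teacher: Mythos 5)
Your proposal is correct and follows essentially the same route the paper relies on: the paper states this theorem by citing the equivalence of ReLU networks with continuous piecewise linear functions (Arora et al.) together with \cite{dondl2021uniform}, and your argument — density of $C_c^\infty(\Omega)$ in $W^{1,p}_0(\Omega)$, piecewise linear interpolation on a strictly interior triangulation to preserve compact support, and the exact depth-$\lceil\log_2(d+1)\rceil+1$ representation of CPWL functions — is precisely the standard argument behind that citation. Your attention to keeping the interpolant's support compactly contained in $\Omega$ is exactly the point that makes the zero-boundary-value claim work.
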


The difference of this result to other universal approximation results \cite{hornik1991approximation, czarnecki2017sobolev} is the approximating neural network function are guaranteed to have zero boundary values. This is a special property of the ReLU activation function and implies the consistency of the boundary penalty method for arbitrary penalization strengths as we will see later. In order to quantify the error that is being made by the variational training of ReLU networks with boundary penalty, we use the following result from \cite{guhring2019error}, where other results on approximation bounds in Sobolev spaces have been obtained in~\cite{guhring2020approximation, xu2020finite, siegel2020approximation, siegel2020high, hon2021simultaneous, duan2021convergence, de2021approximation}.
\begin{restatable}[Quantitative universal approximation]{theo}{quantUA}\label{thm:quantUA}
Let \(\Omega\subseteq\mathbb R^d\) be a bounded and open set with Lipschitz regular boundary\footnote{or more generally, that \(\Omega\) is a Sobolev extension domain}, let \(k\in(1, \infty)\), \(p\in[1, \infty]\) and fix an arbitrary function $u\in W^{k,p}(\Omega)$. Then, for every \(n\in\mathbb N\), there is a ReLU network $u_n$ with \(\mathcal O(\log_2^2(n^{k/d})\cdot n)\) many parameters and neurons such that
    \[ \left\lVert u - u_n \right\rVert_{W^{s, p}(\Omega)}\le c(s)\cdot \lVert u \rVert_{W^{k, p}(\Omega)} \cdot n^{-(k-s)/d} \]
for every \(s\in[0, 1]\). 
\end{restatable}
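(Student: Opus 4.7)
The plan is to follow the Yarotsky–Gühring–Kutyniok–Petersen strategy: first extend $u$ to all of $\mathbb{R}^d$, then locally approximate it by polynomials on a regular grid, and finally emulate the resulting piecewise-polynomial partition-of-unity approximant by a single ReLU network whose size and Sobolev error we can bound.

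Concretely, I would begin by invoking a Sobolev extension operator for the Lipschitz (or Sobolev-extension) domain $\Omega$ to obtain $\tilde u\in W^{k,p}(\mathbb R^d)$ with $\lVert\tilde u\rVert_{W^{k,p}(\mathbb R^d)}\le C\lVert u\rVert_{W^{k,p}(\Omega)}$; this reduces everything to an approximation problem on a cube. Next I cover the relevant cube by $N\sim n$ axis-aligned subcubes of side length $h\sim n^{-1/d}$ and fix a smooth partition of unity $\{\phi_i\}$ subordinate to a doubling of this cover, with $\lVert\phi_i\rVert_{W^{l,\infty}}\lesssim h^{-l}$. On each enlarged patch I replace $\tilde u$ by its averaged Taylor polynomial $p_i$ of degree less than $k$; the Bramble–Hilbert lemma then yields the local estimate
\[
\lVert \tilde u - p_i\rVert_{W^{s,p}(\text{patch})}\lesssim h^{k-s}\,\lVert \tilde u\rVert_{W^{k,p}(\text{patch})} \qquad \text{for } s\in[0,1].
\]
Summing $\sum_i \phi_i p_i$ and using finite overlap of the patches gives a piecewise-polynomial approximant $P$ with $\lVert u - P\rVert_{W^{s,p}(\Omega)}\lesssim n^{-(k-s)/d}\lVert u\rVert_{W^{k,p}(\Omega)}$.

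The heart of the argument is then to realize $P$ as a ReLU network. Using Yarotsky's sawtooth construction one builds, for any target accuracy $\varepsilon$, a ReLU network of depth and width $\mathcal O(\log(1/\varepsilon))$ that approximates $x\mapsto x^2$ on a bounded interval with error $\varepsilon$; via $xy=\tfrac12((x+y)^2-x^2-y^2)$ this yields a multiplication network, and iterating produces networks that emulate any monomial of bounded degree. A similar construction (or an explicit CPWL formula) gives a ReLU realization of each bump $\phi_i$ together with its polynomial weight $p_i$. Because there are $\mathcal O(n)$ patches and each local network has $\mathcal O(\log^2(n^{k/d}))$ parameters when $\varepsilon$ is tuned to $n^{-(k-s)/d}$, the global network has the claimed $\mathcal O(\log_2^2(n^{k/d})\cdot n)$ many parameters and neurons once the $\phi_i p_i$ are summed through a final linear layer.

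The main obstacle is controlling the approximation error in the Sobolev norm $W^{s,p}$ rather than merely in $L^p$: derivatives of the multiplication network blow up with the approximation depth, so the error estimate for each local gadget must be carried out in $W^{1,p}$ with explicit dependence on $h$ and $\varepsilon$, and then combined with the $W^{1,\infty}$ bounds on $\phi_i$ via the product rule. Once the $s=0$ and $s=1$ bounds are established, the intermediate cases $s\in(0,1)$ follow from standard Sobolev interpolation. Balancing the per-patch accuracy $\varepsilon$ against the Bramble–Hilbert error $h^{k-s}$ — taking $\varepsilon\sim n^{-(k-s)/d}$ so that the network error does not dominate — produces the stated rate $c(s)\lVert u\rVert_{W^{k,p}(\Omega)}\,n^{-(k-s)/d}$ uniformly in $s\in[0,1]$.
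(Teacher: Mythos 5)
Your proposal reconstructs, essentially step by step, the argument of G\"uhring--Kutyniok--Petersen that the paper's proof simply cites: Sobolev extension, localization by a partition of unity on a grid of mesh $h\sim n^{-1/d}$, averaged Taylor polynomials with Bramble--Hilbert, Yarotsky-type square/multiplication gadgets, and interpolation between $s=0$ and $s=1$. So the route is the same; the paper just delegates all of this to the reference and spends its proof on one specific check, and that check is exactly where your write-up is shaky. The theorem asserts a \emph{single} network $u_n$ achieving the rate $n^{-(k-s)/d}$ for \emph{every} $s\in[0,1]$ simultaneously. Your tuning ``take $\varepsilon\sim n^{-(k-s)/d}$'' makes the per-patch accuracy, hence the network, depend on $s$, which only proves the weaker statement ``for each $s$ there is a network.'' The fix is the one you already half-state: fix the accuracy once (take $\varepsilon\sim n^{-k/d}$, i.e.\ $\varepsilon=M^{-k}$ with $M\sim n^{1/d}$, as in the paper), verify for that one fixed network both the $L^p$ bound $\precsim n^{-k/d}$ and the $W^{1,p}$ bound $\precsim n^{-(k-1)/d}$ --- the latter needs the $W^{1,\infty}$ error of the multiplication gadget, not just its $L^\infty$ error, combined with the $h^{-1}$ growth of $\nabla\phi_i$ --- and only then interpolate to get all intermediate $s$ for the \emph{same} network. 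With that adjustment your argument coincides with the cited construction, including the count $\mathcal O(\log_2^2(n^{k/d})\cdot n)$ of parameters (the extra logarithm coming from synchronizing/de-skipping the parallel local subnetworks).
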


\section{Error estimates for the boundary penalty method}\label{sec:errorEstimates}

In this section we bound the distance of a function \(v\in V\) to the solution \(u^\ast\) of the variational problem in dependence of the approximation capabilities of the ansatz class \(V\), the optimization quality and the penalization strength. We begin by stating a general curvature based bound which can be seen as a nonlinear version of C\'{e}a's Lemma.

\subsection{A C\'{e}a Lemma}

The following proof of C\'ea's Lemma is based on the curvature properties of a quadratic, coercive energy defined on a Hilbert space. Note that in the following proposition, $V$ does not need to be a vector space.
\begin{proposition}[C\'ea's Lemma]\label{prop:cea}
    Let $X$ be a Hilbert space, $V\subseteq X$ any subset and $a\colon X\times X \to \mathbb{R}$ a symmetric, continuous and $\alpha$-coercive bilinear form. For $f\in X^*$ define the quadratic energy \(E(u)\coloneqq \frac12a(u,u) - f(u)\) 
and denote its unique minimiser by $u^\ast$. Then for every \(v\in V\) it holds that 
\begin{equation*}
    \lVert v - u^\ast \rVert_X \leq \sqrt{ \frac{2\delta}{\alpha} + \frac{1}{\alpha} \inf_{\tilde v\in V}\lVert \tilde v - u^\ast \rVert_a^2},
\end{equation*}
where \(\delta = E(v) - \inf_{\tilde v\in V}E(\tilde v)\) and \(\lVert u \rVert_a^2\coloneqq a(u, u)\) is the norm induced by \(a\).
\end{proposition}
\begin{proof}
    As $E$ is quadratic it can be exactly expanded using Taylor's formula. Hence, for every $h\in X$ it holds that
    \begin{align*}
    E(u + h) & = E(u^\ast) + \frac12 D^2E(u^\ast)(h,h) = E(u^\ast) + \frac12 a(h, h) = E(u^\ast) + \frac12 \lVert h \rVert_a^2,
    \end{align*}
    where we used \(DE(u^\ast)=0\). Inserting $v - u^\ast$ for $h$ we obtain
    \begin{equation*}
        E(v) - E(u^\ast) = \frac{1}{2} a(v - u^\ast, v - u^\ast) \geq \frac{\alpha}{2}\lVert v - u^\ast \rVert^2_X.
    \end{equation*}
    On the other hand we compute
    \begin{align*}
        E(v) - E(u^\ast) &= E(v) - \inf_{\tilde v\in V}E(\tilde v) + \inf_{\tilde v\in V}(E(\tilde v) - E(u^\ast))
        \\
        &= \delta + \frac12\inf_{\tilde v\in V} \lVert \tilde v - u^\ast \rVert_a^2.
    \end{align*}
    Combining the two estimates and rearranging terms yields the assertion.
\end{proof}

Proposition~\ref{prop:cea} is all we need to derive error estimates for coercive problems with non-essential boundary conditions. We give an example.
\begin{corollary}[Neumann Problem]\label{corollary:neumann_problem}
    Let $\Omega\subset\mathbb{R}^d$ be a bounded Lipschitz domain and let $f$ be a fixed member of $H^1(\Omega)^*$. Denote by $u\in H^1(\Omega)$ the weak solution to 
    \begin{equation*}
        -\Delta u + u = f \quad\text{in }H^1(\Omega)^*.
    \end{equation*}
    Let $\Theta$ be the parameter set of a neural network architecture such that $u_\theta \in H^1(\Omega)$ for every $\theta\in \Theta$. Then for every $\theta\in\Theta$ it holds 
    \begin{equation*}
        \lVert u_\theta - u \rVert_{H^1(\Omega)} \leq \sqrt{2\delta + \inf_{\eta\in\Theta}\lVert u_\eta - u \rVert^2_{H^1(\Omega)}}
    \end{equation*}
    where \[\delta = \lVert u_\theta \rVert^2_{H^1(\Omega)} - f(u_\theta) -  \inf_{\eta\in\Theta}\left[ \lVert u_\eta \rVert^2_{H^1(\Omega)} - f(u_\eta)\right]. \]
\end{corollary}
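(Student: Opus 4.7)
The statement is a direct specialisation of Proposition~\ref{prop:cea} to the symmetric coercive bilinear form associated with the Neumann problem, so the plan has three short and essentially mechanical steps.

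First, I would identify the abstract ingredients needed to invoke Proposition~\ref{prop:cea}. Take $X = H^1(\Omega)$ with its canonical inner product, $V = \{u_\theta : \theta\in\Theta\} \subseteq X$, and define the bilinear form
\[
a(u,v) = \int_\Omega \nabla u \cdot \nabla v \, dx + \int_\Omega u v \, dx.
\]
The key observation is that $a(\cdot,\cdot)$ is nothing but the $H^1(\Omega)$ inner product; in particular it is symmetric, continuous, and $1$-coercive on $X$, and the induced norm $\lVert \cdot \rVert_a$ coincides exactly with $\lVert \cdot \rVert_{H^1(\Omega)}$. This is the fact that makes the right-hand side of the target inequality involve only the $H^1$-norm and no additional constants.

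Second, I would verify that the given $u$ is the unique minimiser over $H^1(\Omega)$ of the quadratic energy $E(v) = \tfrac{1}{2}a(v,v) - f(v) = \tfrac{1}{2}\lVert v\rVert_{H^1(\Omega)}^2 - f(v)$. By the remark following Proposition~\ref{prop:cea}, this amounts to checking that $a(u,\cdot) = f$ in $X^*$, which is just the assumed weak formulation $-\Delta u + u = f$ in $H^1(\Omega)^*$.

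Third, I would apply Proposition~\ref{prop:cea} with these choices to any $v = u_\theta \in V$. Using $\alpha = 1$ and $\lVert \cdot \rVert_a = \lVert \cdot\rVert_{H^1(\Omega)}$ in the conclusion of the proposition gives
\[
\lVert u_\theta - u\rVert_{H^1(\Omega)} \;\leq\; \sqrt{\,2\bar\delta + \inf_{\eta\in\Theta}\lVert u_\eta - u\rVert_{H^1(\Omega)}^2\,},
\]
with $\bar\delta = E(u_\theta) - \inf_{\eta\in\Theta} E(u_\eta)$. Finally I would unfold the definition of $E$, observe that the constant scaling in $E$ is exactly what produces the expression for $\delta$ displayed in the corollary, and conclude. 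There is no genuine obstacle here: the content of the result lies entirely in Proposition~\ref{prop:cea}, and the only thing to be careful about is the correct identification of the full $H^1$-inner product as the relevant bilinear form, which automatically provides the unit coercivity constant and makes the norms on both sides coincide.
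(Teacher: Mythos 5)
Your proof is correct and is exactly the paper's argument: identify $a$ with the $H^1(\Omega)$ inner product, note $\alpha=1$ and $\lVert\cdot\rVert_a=\lVert\cdot\rVert_{H^1(\Omega)}$, and apply Proposition~\ref{prop:cea} with $V=\{u_\theta:\theta\in\Theta\}$. (One small caveat: with $E(v)=\tfrac12 a(v,v)-f(v)$ the quantity $E(u_\theta)-\inf_\eta E(u_\eta)$ carries a factor $\tfrac12$ on the squared norms that the corollary's displayed $\delta$ omits, so your claim that the scaling ``exactly'' produces that expression is slightly off --- but this is an inconsistency in the paper's statement rather than a gap in your argument.)
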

\begin{proof}
    The bilinear form corresponding to the above Neumann problem is
    \begin{equation*}
        a\colon H^1(\Omega)\times H^1(\Omega) \to \mathbb{R}, \quad a(u,v) = \int_\Omega \nabla u\nabla v + uv\mathrm dx 
    \end{equation*}
    and therefore its coercivity constant is $\alpha=1$ and the associated norm $\lVert\cdot\rVert_a$ is the natural one on $H^1(\Omega)$. Employing Proposition~\ref{prop:cea} yields the assertion.
\end{proof}
\begin{remark}
    Corollary~\ref{corollary:neumann_problem} yields $H^1(\Omega)$ convergence of the Deep Ritz Method provided the ansatz class possesses universal approximation properties with respect to the $H^1(\Omega)$ norm. This is of course also a necessary requirement and fulfilled by a wide class of network architectures and activation functions, see~\cite{hornik1991approximation,czarnecki2017sobolev} or for approximation rates~\cite{guhring2019error}. We stress that any (quantitative) universal approximation theorem for Sobolev topologies can be combined with the above result, such as Theorem \ref{thm:quantUA} for ReLU neural networks. 
    
    Furthermore, the form of the differential equation in the above corollary can easily be generalized. One can for example consider general second order elliptic PDEs in divergence form with non-essential boundary conditions as long as these are coercive and can be derived from a minimization principle.
\end{remark}

\begin{remark}[Dimension Dependence and Adaptation to Smoothness]\label{rem:smoothness}
    Assume the solution $u$ to the Neumann problem is a member of $H^k(\Omega)$ for some $k>1$. Then applying the quantitative universal approximation Theorem~\ref{thm:quantUA} we estimate
    \begin{equation*}
        \lVert u_\theta - u \rVert_{H^1(\Omega)} \precsim \sqrt{ 2\delta + c \cdot \lVert u \rVert_{H^k(\Omega)}n^{-(k-1)/d} },
    \end{equation*}
    where $d\in\mathbb{N}$ is the spatial dimension. 
    While this estimate is not dimension independent, it indicates how smoothness mitigates the deterioration of error decay rates for high dimensions. 
    We see that the merit of neural networks to achieve approximation rates increasing with the smoothness of the target function carries over to the error decay in the deep Ritz method. In contrast, to achieve the approximation rate and an error decay rate of $(k-1)/d$ with finite elements one needs to for example use $P^{k-1}$ elements~\cite{ern2004theory}, which complicates the ansatz class and therefore the approach. 
\end{remark}
\mz 
    \begin{remark}[Practical Realization of Rates]\label{remark:rates_in_practice}
        There is a gap between the theory and the practice of neural network based methods for the solution of PDEs. Error decay rates, as predicted by our results cannot be observed in practice due to the difficulties of computing minima of non-convex functions. In practice, one observes moderate errors that don't decrease beyond a certain accuracy when the numbers of the parameters of the neural network ansatz architecture are increased. We refer to \cite{lee2021partition} and the references therein for a more detailed description of this phenomenon. 
   
       However, what one observes is that neural network based methods in general and the Deep Ritz Method in particular are well suited for problems in high spatial dimensions or a high dimensional parameter space. Practical evidence can already be found in the paper introducing the Deep Ritz Method, see \cite{weinan2018deep} and further high dimensional examples -- even of industrial scale -- can be found in \cite{hennigh2021nvidia}. We propose to view our results as a qualitative explanation of these observations. 
    \end{remark}
\eee

\subsection{An error estimate for the boundary penalty method}
The treatment of Dirichlet boundary conditions corresponds to a constrained optimization problem, as in standard neural network architectures zero boundary values cannot be directly encoded. We use the boundary penalty method as a way to enforce Dirichlet boundary conditions. For ease of presentation, we discuss our approach for the concrete equation
\begin{align}\label{equation:prototype}
    \begin{split}
        -\operatorname{div}\left(A\nabla u\right) & = f \quad \text{in } \Omega \\
        u & = 0 \quad \text{on } \partial\Omega,
    \end{split}
\end{align}
where $A\in L^\infty(\Omega,\mathbb{R}^{d\times d})$ is a symmetric and elliptic coefficient matrix. The weak formulation of this equation gives rise to the bilinear form
\begin{equation*}
    a\colon H^1(\Omega)\times H^1(\Omega) \to \mathbb{R}, \quad a(u,v) = \int_\Omega A\nabla u\cdot\nabla v\mathrm dx
\end{equation*}
and the energy
\begin{equation*}
    E\colon H^1(\Omega) \to \mathbb{R}, \quad E(u) = \frac12 a(u,u) - f(u)
\end{equation*}
where $f\in H^1(\Omega)^*$. Using the boundary penalty method as an approximation for \eqref{equation:prototype} leads to the bilinear form
\begin{equation*}
    a_\lambda\colon H^1(\Omega)\times H^1(\Omega) \to \mathbb{R}, \quad a_\lambda(u,v) = \int_\Omega A\nabla u\nabla v\mathrm dx + \lambda \int_{\partial\Omega}uv\mathrm ds
\end{equation*}
for a penalty parameter $\lambda >0$ and the energy
\begin{equation*}
    E_\lambda\colon H^1(\Omega)\to \mathbb{R}, \quad E_\lambda(u) = \frac12 a_\lambda(u,u) - f(u).
\end{equation*}
The central error estimation is collected in the following Theorem. Note that we require $H^2(\Omega)$ regularity of the solution to equation \eqref{equation:prototype}.

\begin{theorem}\label{thm:main}
Let $\Omega\subset\mathbb{R}^d$ be a bounded domain with $C^{1,1}$ boundary, $f\in L^2(\Omega)$ and assume $A\in C^{0, 1}(\Omega,\mathbb{R}^{d\times d})$ is symmetric, uniformly elliptic with ellipticity constant $\alpha>0$. 
By $u^\ast\in H^1_0(\Omega)$ we denote the solution of~\eqref{equation:prototype} and by $u_\lambda$ the minimizer of the penalized energy $E_\lambda$ over $H^1(\Omega)$. Fix an arbitrary subset $V\subset H^1(\Omega)$ and denote the coercivity constants of $a_\lambda$ by $\alpha_\lambda>0$ and set $\delta \coloneqq E_\lambda(v) - \inf_{\tilde v\in V}E_\lambda(\tilde v)$. Then there is a constant $c>0$, only depending on $A$ and $\Omega$, such that for every \(v\in V\) and \(\lambda > 0\) it holds that 
\begin{equation}\label{eq:generalEstimate}
    \lVert v - u^\ast \rVert_{H^1(\Omega)} \leq \sqrt{ \frac{2\delta}{\alpha_\lambda} + \frac{1}{\alpha_\lambda} \inf_{\tilde v\in V}\lVert \tilde v - u_\lambda \rVert_{a_\lambda}^2} + c  \lambda^{-1}\lVert f \rVert_{L^2(\Omega)},
\end{equation}
where \(\lVert u \rVert_{a_\lambda}^2\coloneqq a_\lambda(u, u)\) is the norm induced by \(a_\lambda\). Further, we can choose 
    \[ c \coloneqq c_F c_{\textit{reg}}\sqrt{\lVert a_1 \rVert} \lVert T \rVert_{\mathcal{L}(H^2(\Omega);\mathcal{H}(\Omega))}, \]
where $T\colon H^2(\Omega)\to H^1(\Omega)$ maps a function $u$ to the $A$-harmonic extension of\footnote{here $\partial_A u = \nu\cdot A\nabla u$ and $\nu$ denotes the outer normal of $\Omega$} $\partial_A u$, $c_F$ denotes the Friedrich constant (see Proposition~\ref{poinclemma}) and $c_{\textit{reg}}$ is the operator norm of 
    \[\left(-\operatorname{div}(A\nabla\cdot)\right)^{-1}\!\colon L^2(\Omega)\to H^2(\Omega)\cap H^1_0(\Omega).\]
\end{theorem}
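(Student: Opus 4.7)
The plan is to split via the triangle inequality
\[
\lVert v - u^\ast \rVert_{H^1(\Omega)} \leq \lVert v - u_\lambda \rVert_{H^1(\Omega)} + \lVert u_\lambda - u^\ast \rVert_{H^1(\Omega)}
\]
and estimate the two summands separately. The first, measuring how well $V$ approximates the penalised minimiser $u_\lambda$, is handled by a direct application of Proposition~\ref{prop:cea} with $X = H^1(\Omega)$ and the bilinear form $a_\lambda$. The form $a_\lambda$ is symmetric, continuous, and $\alpha_\lambda$-coercive on $H^1(\Omega)$ (the coercivity being a consequence of the ellipticity of $A$ combined with Friedrich's inequality, Proposition~\ref{poinclemma}), so Proposition~\ref{prop:cea} delivers exactly the square-root term in~\eqref{eq:generalEstimate}.

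For the penalisation error $\lVert u_\lambda - u^\ast\rVert_{H^1}$ I would first derive the error equation. Integrating by parts against a test function $\phi \in H^1(\Omega)$ in the strong form for $u^\ast$, using $u^\ast|_{\partial\Omega}=0$, and subtracting the weak formulation for $u_\lambda$ yields
\[
a_\lambda(u_\lambda - u^\ast, \phi) \,=\, -\int_{\partial\Omega} \partial_A u^\ast \cdot \phi \,\mathrm ds \qquad\text{for every } \phi \in H^1(\Omega).
\]
Testing na\"\i vely with $\phi = u_\lambda - u^\ast$ only delivers the suboptimal rate $\lambda^{-1/2}$, and sharpening this to $\lambda^{-1}$ is the heart of the argument. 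I would do so by exploiting the $A$-harmonic extension $w^\ast := T(u^\ast)\in H^1(\Omega)$, whose boundary trace equals $\partial_A u^\ast$ by construction, and introducing the corrected error $\tilde e := (u_\lambda - u^\ast) + \lambda^{-1} w^\ast$. Expanding $a_\lambda(\tilde e, \phi)$ and using $w^\ast|_{\partial\Omega}=\partial_A u^\ast$ makes the boundary integrals cancel, leaving the clean identity
\[
a_\lambda(\tilde e, \phi) \,=\, \lambda^{-1}\, a(w^\ast, \phi) \qquad\text{for every } \phi \in H^1(\Omega).
\]

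The remaining steps are routine. Choosing $\phi = \tilde e$ and applying Cauchy--Schwarz for the positive semi-definite form $a$, together with $a(\tilde e,\tilde e)\le a_\lambda(\tilde e,\tilde e)$, yields
\[
\lVert \tilde e\rVert_{a_\lambda} \,\leq\, \lambda^{-1}\lVert w^\ast\rVert_a \,\leq\, \lambda^{-1}\sqrt{\lVert a_1\rVert}\,\lVert w^\ast\rVert_{H^1}.
\]
I would then convert from $a_\lambda$-norm to $H^1$-norm by combining the ellipticity bound $\alpha\lVert\nabla\tilde e\rVert_{L^2}^2 \leq a(\tilde e,\tilde e)$, the boundary control $\lambda\lVert\tilde e\rVert_{L^2(\partial\Omega)}^2 \leq a_\lambda(\tilde e,\tilde e)$, and Friedrich's inequality, and conclude through
\[
\lVert u_\lambda - u^\ast\rVert_{H^1} \,\leq\, \lVert\tilde e\rVert_{H^1} + \lambda^{-1}\lVert w^\ast\rVert_{H^1},
\]
bounding $\lVert w^\ast\rVert_{H^1} \leq \lVert T\rVert\,\lVert u^\ast\rVert_{H^2} \leq \lVert T\rVert\, c_{\textit{reg}}\lVert f\rVert_{L^2}$ via the definition of $T$ and elliptic regularity. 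Collecting the constants produces the stated $c = c_F c_{\textit{reg}}\sqrt{\lVert a_1\rVert}\lVert T\rVert$ up to harmless numerical factors. The main obstacle is precisely obtaining the sharp $\lambda^{-1}$ rate, which genuinely requires the corrector $\lambda^{-1}w^\ast$; without it, naive energy estimates only give $\lambda^{-1/2}$, and the rate $\lambda^{-1}$ is what matches (and later drives) the approximation/penalisation balance analysed in Section~\ref{sec:rates}.
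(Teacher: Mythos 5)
Your proposal is correct, and for the crucial penalization estimate $\lVert u_\lambda - u^\ast\rVert_{H^1(\Omega)}\precsim\lambda^{-1}\lVert f\rVert_{L^2(\Omega)}$ it takes a genuinely different route from the paper. The first half (triangle inequality plus C\'ea's Lemma for $a_\lambda$, coercive by Friedrich) is identical. For the second half the paper shows that $u_\lambda-u^\ast$ is weakly $A$-harmonic and then develops Steklov spectral theory on $\mathcal H(\Omega)$ -- the $a_1$-orthogonal decomposition $H^1=\mathcal H\oplus H^1_0$, the Steklov spectral theorem, an explicit Fourier solution formula $v_\lambda=\lambda^{-1}\sum_j\frac{1+\mu_j}{1+\mu_j/\lambda}\bigl(\int_{\partial\Omega}\partial_Au^\ast e_j\,\mathrm ds\bigr)e_j$ -- and bounds the coefficients by $|a_1(w^\ast,e_j)|$ with $w^\ast=T(u^\ast)$, concluding via Parseval. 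Your corrector $\tilde e=(u_\lambda-u^\ast)+\lambda^{-1}w^\ast$ achieves the same cancellation in physical space: the identity $a_\lambda(\tilde e,\phi)=\lambda^{-1}a(w^\ast,\phi)$ checks out (the boundary terms cancel exactly because $\operatorname{tr}(w^\ast)=\partial_Au^\ast$), and the energy estimate then delivers $\lVert\tilde e\rVert_{a_\lambda}\le\lambda^{-1}\lVert w^\ast\rVert_a$. Both arguments ultimately rest on the same two ingredients -- $H^2$ regularity of $u^\ast$ and boundedness of the $A$-harmonic extension operator $T$ -- but yours is entirely elementary and shorter, at the price of not producing the exact solution formula (which the paper does not reuse elsewhere anyway, beyond this bound). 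Two minor points: your conversion $\lVert\tilde e\rVert_{H^1}\precsim\lVert\tilde e\rVert_{a_\lambda}$ uses $\alpha_\lambda\ge\alpha_1$ and hence is uniform only for $\lambda\ge1$; for $\lambda<1$ the claimed bound follows anyway from the trivial a priori estimates on $u_\lambda$ and $u^\ast$, so this regime is harmless but should be dispatched explicitly if you want the statement for all $\lambda>0$. Also your final constant carries an extra additive $\lambda^{-1}\lVert w^\ast\rVert_{H^1}$ from undoing the corrector, so it differs from the paper's $c_Fc_{\textit{reg}}\sqrt{\lVert a_1\rVert}\,\lVert T\rVert$ by a benign factor, as you note.
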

\begin{proof}
The central idea of the proof consists of the error decomposition
\begin{equation*}
    \lVert v - u^\ast \rVert_{H^1(\Omega)} \leq \lVert v - u_\lambda \rVert_{H^1(\Omega)} + \lVert u_\lambda - u^\ast \rVert_{H^1(\Omega)}.
\end{equation*}
The first error can be treated using C\'ea's Lemma. Note that $a_\lambda$ is in fact coercive on $H^1(\Omega)$ which is a consequence of Friedrich's inequality, see Proposition~\ref{poinclemma}. For the second term one uses a Fourier series expansion in a Steklov basis. The latter is useful for weakly $A$-harmonic functions, hence we investigate the equation satisfied by $v_\lambda\coloneqq u^\ast-u_\lambda$. Due to the regularity assumption on $\Omega$ and $A$ we have $\operatorname{div}(A\nabla u^\ast) \in L^2(\Omega)$ and may integrate by parts to obtain for all $\varphi\in H^1(\Omega)$
\begin{equation}
    \int_\Omega f\varphi\mathrm dx = - \int_\Omega \operatorname{div}(A\nabla u^\ast)\varphi\mathrm dx = \int_\Omega A\nabla u^\ast\nabla\varphi\mathrm dx - \int_{\partial\Omega}\partial_Au^\ast\varphi\mathrm ds.\label{equation:eq_on_all_H1}
\end{equation}
Using the optimality condition of $u_\lambda$ yields
\[
    \int_\Omega (A\nabla u_\lambda)\cdot\nabla\varphi\mathrm dx + \lambda\int_{\partial\Omega} u_\lambda\varphi \mathrm ds = \int_\Omega f\varphi \mathrm dx \quad \forall \varphi\in H^1(\Omega).
\]
Subtracting these two equations we obtain that $v_\lambda$ satisfies
\[
    \int_\Omega (A\nabla v_\lambda)\cdot\nabla\varphi\mathrm dx + \int_{\partial\Omega} (\lambda v_\lambda - \partial_A u^\ast)\varphi \mathrm ds = 0 \quad \forall \varphi\in H^1(\Omega).
\]
This implies that $v_\lambda$ is weakly $A$-harmonic, i.e.,  
\[ \int_\Omega (A\nabla v_\lambda)\cdot\nabla\varphi\mathrm dx = 0 \quad \forall \varphi\in H^1_0(\Omega), \]
We claim that there exists a basis $(e_j)_{j\in\mathbb{N}}$ of the space of weakly $A$-harmonic functions and that $v_\lambda$ can be written in terms of this basis as
\begin{equation}\label{equation:series_expansion_v_lambda}
    v_\lambda = \frac1\lambda \sum_{j=0}^\infty c(\lambda)_je_j
\end{equation}
for suitable coefficients $c(\lambda)_j\in\mathbb{R}$. Further, we claim that this Fourier expansion leads to the estimate
\begin{equation}\label{eq:estimateDifference}
    \lVert v_\lambda \rVert_{H^1(\Omega)} \leq \frac c\lambda \lVert f \rVert_{L^2(\Omega)}
\end{equation}
with $c$ as specified in the statement of the Theorem which finishes the proof. The remaining details are provided in the following Section.
\end{proof}
We presented the proof in its above form to draw attention to its key elements and to discuss possible limitations and generalizations.
\begin{remark}[Limitations]
    Our proof requires crucially the $H^2(\Omega)$ regularity of the solution $u^\ast $ to the Dirichlet problem. This is in contrast with the error estimates for non-essential boundary value problems that do not require additional regularity. 
\end{remark} 
\begin{remark}[Generalizations]
    The strategy of the proof of Theorem~\ref{thm:main} holds for a broader class of elliptic zero boundary value problems. The essential requirement is that the bilinear form $a$ of the differential operator is coercive on $H^1_0(\Omega)$ and that $a_\lambda$ is coercive on all of $H^1(\Omega)$. Then, regularity of the solution $u^\ast $ of the zero boundary value problem is required to identify the equation $u^\ast $ satisfies when tested with functions in $H^1(\Omega)$ and not only $H^1_0(\Omega)$, see~\eqref{equation:eq_on_all_H1}.
\end{remark}
\begin{remark}[Optimality of the rate $\lambda^{-1}$]
We demonstrate that the rate\footnote{We write $\precsim$ and $\succsim$ if the inequality $\le$ or $\ge$ holds up to a constant; if both $\precsim$ and $\succsim$ hold, we write $\sim$.}
\[
    \lVert u_\lambda - u^\ast  \rVert_{H^1(\Omega)} \precsim \lambda^{-1}
\]
can not in general be improved. To this end we consider the concrete example
\[
    a_\lambda\colon H^1(0,1)^2 \to \mathbb{R},\quad (u,v)\mapsto \int_0^1 u'v'\mathrm dx + \lambda( u(0)v(0) + u(1)v(1) ). 
\]
The minimiser of $E_\lambda$ with $f\equiv 1$ solves the ODE
\[ 
    -u'' = 1 \quad \text{in }(0,1)
\]
with Robin boundary conditions 
\begin{align*}
    -u'(0) + \lambda u(0) &= 0
    \\
    u'(1) + \lambda u(1) &= 0.
\end{align*}
Its solution is given by 
\[
    u_\lambda(x) = -\frac12 x^2 + \frac12 x + \frac{1}{2\lambda}.
\]
On the other hand the associated Dirichlet problem is solving the same ODE subject to $u(0) = u(1) = 0$ and has the solution
\[  
    u^\ast (x) = -\frac12 x^2 + \frac12 x.
\]
Consequently the difference $u_\lambda - u^\ast $ measured in $H^1(0,1)$ norm is precisely $\frac{1}{2\lambda}$.
\end{remark}

\subsubsection{A solution formula based on Steklov eigenfunctions} 

The Steklov theory yields the existence of an orthonormal eigenbasis of the space  
\begin{equation*}
    \mathcal{H}(\Omega) \coloneqq \big\{ w\in H^1(\Omega) \mid a(w,v) = 0\text{ for all } v\in H^1_0(\Omega) \big\}.
\end{equation*}
of weakly $a$-harmonic functions which we can use for a Fourier expansion of \(v_\lambda\) in order to obtain the desired estimate. For a recent and more general discussion of Steklov theory we refer to \cite{auchmuty2012bases}. The Steklov eigenvalue problem consists of finding $(\mu,w)\in \mathbb{R}\times H^1(\Omega)$ such that
\begin{equation}
    a(w,\varphi) = \mu \int_{\partial\Omega} w \varphi \mathrm ds \quad \text{for all }\varphi\in H^1(\Omega).
\end{equation}
We call \(\mu\) a \emph{Steklov eigenvalue} and \(w\) a corresponding \emph{Steklov eigenfunction}. 

\begin{lemma}[Orthogonal decomposition]\label{orthogonalDecomopsiton}
    We can decompose the space $H^1(\Omega)$ into
    \begin{equation*}
        H^1(\Omega) = \mathcal{H}(\Omega) \oplus_{a_1}H^1_0(\Omega)
    \end{equation*}
    with the decomposition being $a_1$-orthogonal.
\end{lemma}
\begin{proof}
        By the definition of $a_1$ and $\mathcal{H}(\Omega)$ the two spaces are $a_1$ orthogonal. To see that it spans all of $H^1(\Omega)$ let $u \in H^1(\Omega)$ be given. Let \(u_{a}\) be the unique solution of $a(u_{a},\cdot) = 0$ in $H^1_0(\Omega)^*$ subject to $\operatorname{tr}(u_{a}) = \operatorname{tr}(u)$. Then the decomposition is given as $u = u_{a} + (u - u_{a}) = u_{a} + u^\ast $.
    \end{proof}

\begin{theorem}[Steklov spectral theorem]\label{steklovSpectralTheorem}
    Let $\Omega\subseteq\mathbb R^d$ be open and \(a\) be a positive semi-definite bilinear form on $H^1(\Omega)$ and \(\mathcal H(\Omega)\hookrightarrow L^2(\partial\Omega)\) be compact. Then there exists a non decreasing sequence $(\mu_j)_{j \in \mathbb{N}} \subseteq [0,\infty)$ with $\mu_j \to \infty$ and a sequence $(e_j)_{j \in \mathbb{N}} \subseteq \mathcal{H}(\Omega)$ such that $\mu_j$ is a Steklov eigenvalue with eigenfunction $e_j$. Further, $(e_j)_{j\in\mathbb{N}}$ is a complete orthonormal system in \(\mathcal H(\Omega)\) with respect to \(a_1\).
\end{theorem}
\begin{proof}
    This can be derived from the spectral theory for compact operators as for example described in \cite[Section 8.10]{dobrowolski2010angewandte}. In the notation of \cite{dobrowolski2010angewandte}, set $X = \mathcal{H}(\Omega)$ with inner product $a_1$ and let $Y = L^2(\partial\Omega)$ equipped with its natural inner product. Then this yields a divergent sequence $0 < \tilde{\mu}_1 \leq \tilde{\mu}_2\leq \dots$ growing to \(\infty\) and $(e_j)_{j\in\mathbb{N}}\subseteq \mathcal{H}(\Omega)$ with $a_1(e_i,e_j) = \delta_{ij}$ and 
        \begin{equation}\label{auxiliaryEigenvalueEquation}
            a_1(e_j,w) = \tilde{\mu}_j\int_{\partial\Omega}e_jw\mathrm ds \quad \text{for all } w\in\mathcal{H}(\Omega).
        \end{equation}
        For \(\varphi\in H^1(\Omega)\), let $\varphi = \varphi_a + \varphi_0$ be the orthogonal decomposition of the preceding lemma and compute
        \begin{equation*}
            a_1(e_j, \varphi) = a_1(e_j, \varphi_0) + a_1(e_j, \varphi_a) = a_1(e_j, \varphi_a) = \tilde{\mu}_j\int_{\partial\Omega}e_j \varphi_a\mathrm ds = \tilde{\mu}_j\int_{\partial\Omega}e_j \varphi\mathrm ds.
        \end{equation*}
        Using the definition of \(a_1\) we obtain
        \begin{equation*}
            a(e_j,\varphi) = (\tilde{\mu}_j - 1)\int_{\partial\Omega}e_j\varphi\mathrm ds \quad \text{for all } \varphi\in H^1(\Omega).
        \end{equation*}
        Setting $\mu_j \coloneqq \tilde{\mu}_j - 1$ and noting that the above equality implies $\mu_j \geq 0$ concludes the proof.
    \end{proof}

As a direct consequence we obtain the following representation formula.

\begin{corollary}[Fourier expansion in the Steklov eigenbasis]
Let \(w\in \mathcal{H}(\Omega)\). Then we have
    \[ w = \sum_{j=0}^\infty c_j e_j, \]
where 
    \begin{equation}\label{eq:fourier}
        c_j = (1 + \mu_j) \int_{\partial\Omega} we_j \mathrm ds.
    \end{equation}
\end{corollary}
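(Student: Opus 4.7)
The plan is to invoke the Steklov spectral theorem, which furnishes a complete $a_1$-orthonormal basis $(e_j)$ of $\mathcal{H}(\Omega)$, and then identify the abstract Fourier coefficients $a_1(w,e_j)$ with the boundary integrals appearing on the right of \eqref{eq:fourier}.

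First, because $(e_j)_{j\in\mathbb{N}}$ is a complete orthonormal system in $\mathcal{H}(\Omega)$ with respect to $a_1$, and $w \in \mathcal{H}(\Omega)$, the abstract Hilbert space theory gives the unconditionally convergent expansion
\[
    w = \sum_{j=0}^\infty a_1(w,e_j)\, e_j,
\]
with convergence in the $a_1$-norm and hence, by equivalence of norms (via Friedrich's inequality, Proposition~\ref{poinclemma}), also in $H^1(\Omega)$.

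Next, I would compute the coefficients by exploiting the Steklov eigenvalue equation. Recalling that $a_1(u,v) = a(u,v) + \int_{\partial\Omega} uv\,\mathrm ds$, and using that $e_j$ satisfies the Steklov eigenvalue problem $a(e_j,\varphi) = \mu_j \int_{\partial\Omega} e_j\varphi\,\mathrm ds$ for every $\varphi\in H^1(\Omega)$, I would specialize to $\varphi=w$ and use symmetry of $a_1$:
\[
    a_1(w,e_j) \;=\; a_1(e_j,w) \;=\; a(e_j,w) + \int_{\partial\Omega}e_j w\,\mathrm ds \;=\; (1+\mu_j)\int_{\partial\Omega}e_j w\,\mathrm ds,
\]
which is precisely $c_j$. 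Substituting this identity into the Fourier expansion yields the claimed formula.

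There is no real obstacle here once the Steklov spectral theorem is in hand; the only small point of care is to ensure the Steklov eigenfunction equation is tested against the arbitrary function $w\in H^1(\Omega)$ (not just against elements of $\mathcal H(\Omega)$), which is available in the statement of the spectral theorem, and to invoke symmetry of $a_1$ to pass from $a_1(w,e_j)$ to $a_1(e_j,w)$.
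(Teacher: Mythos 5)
Your proposal is correct and follows essentially the same route as the paper: expand $w$ in the complete $a_1$-orthonormal Steklov basis and identify $a_1(w,e_j)=a(w,e_j)+\int_{\partial\Omega}we_j\,\mathrm ds=(1+\mu_j)\int_{\partial\Omega}we_j\,\mathrm ds$ via the eigenvalue equation. The paper's proof is just a terser version of the same computation, leaving the completeness step implicit.
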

\begin{proof}
Using that \(e_j\) is a Steklov eigenvector, we can compute the Fourier coefficients 
    \[   c_j = a_1(w, e_j) = a(w, e_j) + \int_{\partial\Omega} w e_j\mathrm ds = (1+\mu_j) \int_{\partial\Omega} we_j\mathrm ds. \]
\end{proof}

\begin{lemma}[Solution formula]\label{lemma:solutionformula}
Let $v_\lambda\in H^1(\Omega)$ be the unique solution of
\begin{equation}\label{eq2}
    a(v_\lambda, \varphi) + \int_{\partial\Omega}(\lambda v_\lambda - \partial_A u^\ast )\varphi\,ds = 0 \quad\text{for all }\varphi \in H^1(\Omega).
\end{equation}
Then we have
    \[ v_\lambda = \frac{1}{\lambda} \sum_{j=0}^\infty c(\lambda)_j e_j, \]
where 
    \[ c(\lambda)_j = \frac{1 + \mu_j}{1 + \frac{\mu_j}{\lambda}} \cdot \int_{\partial\Omega} (\partial_A u^\ast )e_j \mathrm ds. \]
\end{lemma}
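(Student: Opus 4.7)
The plan is to exploit the two facts that are readily available: (i) the defining equation implies $v_\lambda$ is weakly $A$-harmonic, so the Steklov basis expansion applies, and (ii) the Steklov eigenvalue relation lets us evaluate $a(v_\lambda, e_j)$ in terms of a boundary integral. First, I would test the defining equation with an arbitrary $\varphi\in H^1_0(\Omega)$. Since $\operatorname{tr}(\varphi)=0$, the boundary term drops out and one obtains $a(v_\lambda,\varphi)=0$ for all such $\varphi$, i.e.\ $v_\lambda\in\mathcal{H}(\Omega)$. By Theorem \ref{steklovSpectralTheorem} and the Fourier expansion corollary that precedes this lemma, we may write $v_\lambda=\sum_{j=0}^\infty \tilde c_j e_j$ convergently in the $a_1$-norm (hence in $H^1(\Omega)$ by Friedrich's inequality), with coefficients $\tilde c_j=(1+\mu_j)\int_{\partial\Omega}v_\lambda e_j\,ds$.

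The next step is to determine the boundary integrals $\int_{\partial\Omega}v_\lambda e_j\,ds$. I would test the defining equation~\eqref{eq2} with $\varphi=e_j$, giving
\[
a(v_\lambda,e_j)+\lambda\int_{\partial\Omega}v_\lambda e_j\,ds=\int_{\partial\Omega}(\partial_A u^\ast)e_j\,ds.
\]
By symmetry of $a$ and the Steklov eigenvalue relation (which holds for all test functions in $H^1(\Omega)$, not only for weakly $A$-harmonic ones), $a(v_\lambda,e_j)=a(e_j,v_\lambda)=\mu_j\int_{\partial\Omega}e_j v_\lambda\,ds$. Hence
\[
(\mu_j+\lambda)\int_{\partial\Omega}v_\lambda e_j\,ds=\int_{\partial\Omega}(\partial_A u^\ast)e_j\,ds,
\]
which I can solve for the boundary integral and substitute back.

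Assembling these pieces gives the coefficient
\[
\tilde c_j=\frac{1+\mu_j}{\mu_j+\lambda}\int_{\partial\Omega}(\partial_A u^\ast)e_j\,ds=\frac{1}{\lambda}\cdot\frac{1+\mu_j}{1+\mu_j/\lambda}\int_{\partial\Omega}(\partial_A u^\ast)e_j\,ds,
\]
which is exactly $\lambda^{-1}c(\lambda)_j$ as claimed, so the series representation follows. Uniqueness of $v_\lambda$ is not re-derived but simply invoked: $a_\lambda$ is coercive on $H^1(\Omega)$ by Friedrich's inequality, so Lax--Milgram guarantees a unique solution and the expansion obtained must be it.

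I do not expect any serious obstacle; the only point that requires some care is making sure the Steklov eigenvalue identity $a(e_j,\varphi)=\mu_j\int_{\partial\Omega}e_j\varphi\,ds$ is applied with $\varphi=v_\lambda\in H^1(\Omega)$ (valid since the identity holds on all of $H^1(\Omega)$), and that the convergence of the Fourier series takes place in $H^1(\Omega)$ rather than only in $L^2(\partial\Omega)$ — which is exactly what Theorem \ref{steklovSpectralTheorem} provides, since $(e_j)$ is orthonormal and complete with respect to $a_1$ on $\mathcal{H}(\Omega)$.
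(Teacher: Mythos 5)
Your proposal is correct and follows essentially the same route as the paper: test \eqref{eq2} with $\varphi=e_j$, use the Steklov eigenvalue relation to replace $a(v_\lambda,e_j)$ by $\mu_j\int_{\partial\Omega}v_\lambda e_j\,\mathrm ds$, solve for the boundary integral, and insert it into the Fourier coefficient formula $c_j=(1+\mu_j)\int_{\partial\Omega}v_\lambda e_j\,\mathrm ds$. Your added remarks on why $v_\lambda\in\mathcal H(\Omega)$ and on the mode of convergence of the series are details the paper leaves implicit, but the argument is the same.
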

\begin{proof}
    Note that \(v_\lambda\) is weakly harmonic and hence, we can apply the previous corollary to compute the Fourier coefficients of \(v_\lambda\). Using that \(v_\lambda\) solves \eqref{eq2} and that \(e_j\) is a Steklov eigenfunction we compute
    \begin{align*}
        \int_{\partial\Omega} v_\lambda e_j \mathrm ds = \frac{1}{\lambda} \int_{\partial\Omega} (\partial_A u^\ast )e_j \mathrm ds  - \frac{1}{\lambda} a(v_\lambda, e_j) = \frac{1}{\lambda} \int_{\partial\Omega} (\partial_A u^\ast )e_j \mathrm ds - \frac{\mu_j}{\lambda} \int_{\partial\Omega} v_\lambda e_j \mathrm ds.
    \end{align*}
    Rearranging this yields the following equation which completes the proof
    \[ \int_{\partial\Omega} v_\lambda e_j \mathrm ds = \frac{1}{\lambda}\cdot \frac{1}{1 + \frac{\mu_j}{\lambda}} \int_{\partial\Omega} (\partial_A u^\ast )e_j \mathrm ds. \]
\end{proof}

\subsubsection{Completing the Proof of Theorem~\ref{thm:main}}
We use the explicit solution formula from Lemma~\ref{lemma:solutionformula} for $v_\lambda = u^\ast  - u_\lambda$ to provide the missing claims in the proof of Theorem~\ref{thm:main}.
\begin{proof}[Completing the Proof of Theorem~\ref{thm:main}]
We have already convinced ourselves that \(v_\lambda \coloneqq u_\lambda - u^\ast \) indeed solves \eqref{eq2}. By the means of Lemma~\ref{lemma:solutionformula} it suffices to bound
    \[ \bigg\lVert \sum_{j=0}^\infty c(\lambda)_j e_j \bigg\rVert_{H^1(\Omega)} \]
independently of \(\lambda > 0\). 
Let us denote the $A$-harmonic extension of $\partial_A u^\ast $ with $w$, we obtain
    \[ c(\lambda)_j^2 \le (1 + \mu_j)^2 \left(\int_{\partial\Omega} (\partial_A u^\ast ) e_j \mathrm ds \right)^2 = a_1(w, e_j)^2. \]
    Now we can estimate
    \begin{align*}
        \sum_{j = 0}^\infty (1 + \mu_j)^2 \left(\int_{\partial\Omega} (\partial_A u^\ast ) e_j \mathrm ds \right)^2  & = \sum_{j = 0}^\infty a_1(w, e_j)^2 = a_1(w, w) \\ & \le \lVert a_1 \rVert \, \lVert w \rVert^2_{H^1(\Omega)}
        \\ &\le 
        \lVert a_1  \rVert \, \lVert u^\ast  \rVert^2_{H^2(\Omega)}\lVert T \rVert^2_{\mathcal{L}(H^2(\Omega);H^1(\Omega))}
        \\ & \le
        \lVert a_1  \rVert \,  c^2_{\textit{reg}} \lVert f \rVert^2_{L^2(\Omega)}\lVert T \rVert^2_{\mathcal{L}(H^2(\Omega);H^1(\Omega))},
    \end{align*}
    where $T\colon H^2\to H^1$ is the mapping that assigns a function $u$ the harmonic extension of $\partial_A u$.
    Consequently, we obtain by Parseval's identity
    \begin{equation*}
        \lVert v_\lambda \rVert_{H^1(\Omega)} \le c_F \lVert v_\lambda \rVert_{a_1} = \frac{c_F}{\lambda}\sqrt{ \sum_{j = 1}^\infty c(\lambda)_j^2 } \le \frac{c_F}{\lambda}\sqrt{\lVert a_1 \rVert} \, c_{\textit{reg}}\lVert f \rVert_{L^2(\Omega)}\lVert T \rVert_{\mathcal{L}(H^2(\Omega);H^1(\Omega))}
    \end{equation*}
    which finishes the proof.
\end{proof}

\subsection{Estimates under lower regularity of the right-hand side}
If $f\notin L^2(\Omega)$, Theorem~\ref{thm:main} cannot be applied as the estimation of the term $\lVert u_\lambda - u^\ast  \rVert_{H^1(\Omega)}$ requires that $u^\ast $ is a member of $H^2(\Omega)$. The next Lemma shows that at the expense of a worse rate and norm, we can still estimate this term for distributional right-hand sides $f\in H^1(\Omega)^*$.

\begin{lemma}
    Let $\Omega \subset \mathbb{R}^d$ be a bounded domain with $C^{1,1}$ boundary, assume that $A\in C^{0,1}(\Omega,\mathbb{R}^{d\times d})$ is symmetric and uniformly elliptic, $f\in H^1(\Omega)^*$ and let $u^\ast $ and $u_\lambda$ be as in Theorem~\ref{thm:main}. Then it holds
    \begin{equation*}
        \lVert u_\lambda - u^\ast  \rVert_{L^2(\Omega)} \leq c\cdot\lVert f\rVert_{H^1(\Omega)^*}\lambda^{-1/2}.
    \end{equation*}
\end{lemma}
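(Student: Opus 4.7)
The plan is to bootstrap from an $L^2(\partial\Omega)$ estimate on $v_\lambda \coloneqq u_\lambda - u^\ast$ to an $L^2(\Omega)$ estimate via an Aubin--Nitsche type duality argument that exploits the fact that $v_\lambda$ is still weakly $A$-harmonic even under the relaxed regularity assumptions.

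First I would establish that $v_\lambda$ is weakly $A$-harmonic. Testing the equation for $u_\lambda$ against $\varphi \in H^1_0(\Omega)$ eliminates the boundary term, and subtracting the equation for $u^\ast$ (valid on $H^1_0(\Omega)$) gives $a(v_\lambda, \varphi) = 0$ for all $\varphi \in H^1_0(\Omega)$. Next, testing the equation for $u_\lambda$ against $v_\lambda \in H^1(\Omega)$ itself, using $\operatorname{tr}(u^\ast) = 0$ to rewrite the boundary integral as $\|v_\lambda\|_{L^2(\partial\Omega)}^2$, and using $A$-harmonicity of $v_\lambda$ to kill the cross term $a(u^\ast, v_\lambda) = 0$, yields the energy identity
\[ \|v_\lambda\|_a^2 + \lambda \|v_\lambda\|_{L^2(\partial\Omega)}^2 = f(v_\lambda). \]
Bounding the right-hand side by $\|f\|_{H^1(\Omega)^\ast}\|v_\lambda\|_{H^1(\Omega)}$, invoking Friedrich's inequality together with ellipticity of $A$ to estimate $\|v_\lambda\|_{H^1(\Omega)}^2 \lesssim \|v_\lambda\|_a^2 + \|v_\lambda\|_{L^2(\partial\Omega)}^2$, and then absorbing using Young's inequality on the resulting quadratic inequality produces (for $\lambda$ bounded away from zero)
\[ \|v_\lambda\|_{L^2(\partial\Omega)} \le C\,\lambda^{-1/2}\|f\|_{H^1(\Omega)^\ast}. \]

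For the duality step, I would introduce the adjoint problem: let $z \in H^1_0(\Omega)$ be the unique solution of $-\operatorname{div}(A\nabla z) = v_\lambda$ in $\Omega$. The $C^{1,1}$ regularity of $\partial\Omega$ and Lipschitz regularity of $A$ give $z \in H^2(\Omega)$ with $\|z\|_{H^2(\Omega)} \le c_{\textit{reg}} \|v_\lambda\|_{L^2(\Omega)}$. Using Green's identity,
\[ \|v_\lambda\|_{L^2(\Omega)}^2 = \int_\Omega v_\lambda \bigl(-\operatorname{div}(A\nabla z)\bigr)\,\mathrm dx = a(z, v_\lambda) - \int_{\partial\Omega}(\partial_A z)\, v_\lambda\,\mathrm ds. \]
Since $z \in H^1_0(\Omega)$ and $v_\lambda$ is weakly $A$-harmonic, the interior term vanishes. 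The remaining boundary term is controlled via the trace theorem and elliptic regularity by $\|\partial_A z\|_{L^2(\partial\Omega)}\|v_\lambda\|_{L^2(\partial\Omega)} \lesssim \|z\|_{H^2(\Omega)}\|v_\lambda\|_{L^2(\partial\Omega)} \lesssim \|v_\lambda\|_{L^2(\Omega)}\|v_\lambda\|_{L^2(\partial\Omega)}$. Dividing out one factor of $\|v_\lambda\|_{L^2(\Omega)}$ yields $\|v_\lambda\|_{L^2(\Omega)} \lesssim \|v_\lambda\|_{L^2(\partial\Omega)}$, and combining with the boundary estimate from the first step completes the proof.

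The main delicate point is the first step: since $a$ is only coercive on $H^1_0(\Omega)$ (not on $H^1(\Omega)$), passing from the energy identity to a standalone bound on $\|v_\lambda\|_{L^2(\partial\Omega)}$ requires carefully combining ellipticity, Friedrich's inequality and the boundary penalty; in particular the constant is only uniform for $\lambda$ bounded below (which is the regime of interest since otherwise $\lambda^{-1/2}$ dominates trivially). The duality step, by contrast, is a clean Aubin--Nitsche argument whose only non-routine ingredient is recognizing that $A$-harmonicity of $v_\lambda$ collapses the interior contribution, leaving only a boundary residual.
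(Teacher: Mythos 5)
Your proposal is correct and follows essentially the same route as the paper: an Aubin--Nitsche duality argument with the adjoint problem $-\operatorname{div}(A\nabla z)=v_\lambda$, exploiting weak $A$-harmonicity of $v_\lambda$ to reduce everything to the boundary trace, combined with an energy/Young-inequality estimate giving $\lVert v_\lambda\rVert_{L^2(\partial\Omega)}\precsim\lambda^{-1/2}\lVert f\rVert_{H^1(\Omega)^*}$. The only cosmetic difference is that the paper obtains the trace bound by testing the optimality condition with $u_\lambda$ itself rather than with $v_\lambda$ (the two traces coincide since $\operatorname{tr}(u^\ast)=0$), and your remark that the constant degenerates as $\lambda\to 0$ applies equally to the paper's constant $\alpha_{\lambda/2}^{-1}$.
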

\begin{proof}
    We set $v_\lambda = u_\lambda - u^\ast $ and denote by $w\in H^1_0(\Omega)$ the solution to the equation $-\operatorname{div}(A\nabla w)=v_\lambda$ in $H^1_0(\Omega)^*$. Then, by our assumptions on $\Omega$ and $A$, the function $w$ is a member of $H^2(\Omega)$. This yields upon integration by parts and the fact that $v_\lambda$ is weakly $A$-harmonic that it holds
    \begin{align*}
        \lVert v_\lambda \rVert^2_{L^2(\Omega)} = \int_\Omega A\nabla w\cdot\nabla v_\lambda\mathrm dx - \int_{\partial\Omega}\partial_A w v_\lambda\mathrm ds 
        &= -\int_{\partial\Omega}\partial_A w u_\lambda \mathrm ds 
        \\&
        \leq c \lVert w \rVert_{H^2(\Omega)}\lVert u_\lambda \rVert_{L^2(\partial\Omega)}
        \\&
        \leq c \lVert v_\lambda \rVert_{L^2(\Omega)}\lVert u_\lambda \rVert_{L^2(\partial\Omega)}.
    \end{align*}
    It remains to estimate $\lVert u_\lambda \rVert_{L^2(\partial\Omega)}$. Note that $u_\lambda$ satisfies
    \begin{equation*}
        0 = \int_\Omega A\nabla u_\lambda\cdot \nabla u_\lambda\mathrm dx + \lambda \int_{\partial\Omega}u_\lambda^2\mathrm ds - f(u_\lambda).
    \end{equation*}
    We get after rearranging and using Young's inequality
       \begin{align*}
            \lVert u_\lambda \rVert^2_{L^2(\partial\Omega)}
            &=
            \frac{2}{\lambda}\left( f(u_\lambda) - \left( \int_\Omega A\nabla u_\lambda\cdot \nabla u_\lambda\mathrm dx + \frac{\lambda}{2}\lVert u_\lambda \rVert_{L^2(\partial\Omega)} \right) \right)
            \\&
            \leq 
            \frac{2}{\lambda}\left( \lVert f \rVert_{H^1(\Omega)^*}\lVert u_\lambda \rVert_{H^1(\Omega)} - \alpha_{\lambda/2}\lVert u_\lambda \rVert_{H^1(\Omega)}^2 \right)
            \\
            &\leq
            \frac{\lVert f \rVert_{H^1(\Omega)^*}^2}{2\alpha_{\lambda/2}\lambda}
        \end{align*}
        which completes the proof.
\end{proof}

\section{Penalization strength and error decay}\label{sec:rates}
We have seen that the distance of an ansatz function can be bounded in terms of the optimization error, the approximation power of the ansatz class and the penalization strength. In this section we discuss the trade off of choosing the penalization strength $\lambda$ too large or too small and discuss the implications of different scalings of $\lambda$ in dependecy of the approximation capabilities of the ansatz classes. We combine our general discussion with Theorem~\ref{thm:quantUA} to obtain Theorem~\ref{thm:rates}, however, our discussion can be combined with any result guaranteeing approximation rates of a sequence of ansatz classes. 

We consider a sequence $(V_n)_{n\in\mathbb N}\subseteq H^1(\Omega)$ of ansatz classes and penalization strengths $\lambda_n\sim n^\sigma$. Further, we denote the minimizers of the energies $E_{\lambda_n}$ over $V_n$ by $v^\ast_n\in V_n$. It is our goal to choose $\sigma\in\mathbb R$ in such a way that the upper bound of $\lVert v_n^\ast - u^\ast \rVert_{H^1(\Omega)}$ in~\eqref{eq:generalEstimate} decays with the fastest possible rate. Neglecting constants, the bound evaluates to
\begin{align}\label{eq:estimate1}
    \lVert v_n^\ast - u^\ast \rVert_{H^1(\Omega)} \precsim \sqrt{\frac1{\alpha_{\lambda_n}} \inf_{ v\in V_n} \lVert v - u_{\lambda_{n}} \rVert_{a_{\lambda_n}}^2 } + \lambda_n^{-1}.
\end{align}
We can assume without loss of generality that $\sigma>0$ and hence $\lambda_n\ge1$, because otherwise the upper bound will not decrase to zero. Note that in this case we have $\alpha_{\lambda_n}\ge \alpha_1>0$ and hence the we obtain
\begin{align}\label{eq:estimate}
    \lVert v_n^\ast - u^\ast \rVert_{H^1(\Omega)} \precsim
    \sqrt{ \inf_{v\in V_n} \left( \lVert \nabla(v - u_{\lambda_{n}}) \rVert_{L^2(\Omega)}^2 + n^\sigma \lVert  v - u_{\lambda_{n}} \rVert_{L^2(\partial\Omega)}^2 \right) }  + n^{-\sigma}.
\end{align}
Here, the trade off in choosing $\sigma$ and therefore $\lambda_n$ too large or small is evident. We discuss the implications of this upper bound in three different scenarios.

\paragraph{Approximation rates with zero boundary values} 
Consider the case where there is an element \(v_n\in V_n\cap H^1_0(\Omega)\) such that
    \[ \lVert v_n - u^\ast \rVert_{H^1(\Omega)} \precsim n^{-r}. \]
Using the Euler-Lagrange equations $a_\lambda(u_\lambda, \cdot) = f(\cdot)$ and $a(u^\ast, \cdot) = f(\cdot)$ we can estimate 
\begin{align*}
    \frac12\inf_{v\in V_n} \lVert v - u_\lambda \rVert_{a_\lambda}^2 & =  \inf_{v\in V_n} E_\lambda(v) - E_\lambda(u_\lambda) \le  \inf_{v\in V_n\cap H^1_0(\Omega)} E_\lambda(v) - E(u_\lambda) \\ & \le \inf_{v\in V_n\cap H^1_0(\Omega)} E(v) - E(u^\ast) = \frac12 \inf_{v\in V_n\cap H^1_0(\Omega)} \lVert v - u^\ast \rVert_{a}^2 \precsim n^{-2r}
\end{align*}
independently of \(\lambda\).
Hence, the estimate \eqref{eq:estimate1} yields
\[ \lVert v_n^\ast - u^\ast \rVert_{H^1(\Omega)} \precsim \sqrt{n^{-2r}} + \lambda_n^{-1} \precsim n^{-r}, \]
whenever \(\lambda_n\succsim n^r\). Note that in this case, no trade off in \(\lambda\) exists and the approximation rate with zero boundary values can always be achieved up to optimization. However, the curvature \(\alpha_{\lambda_n}\) of \(E_{\lambda_n}\) increases with \(\lambda_n\). Thus, it seems reasonable to choose \(\lambda_n\) as small as possible, i.e., \(\lambda_n\sim n^{r}\). Approximation rates with zero boundary values have not been established so far for neural networks to the best of our knowledge. 

\paragraph{Approximation error of $u_\lambda$ independent of $\lambda$}
Now, we consider the case, without an approximation rate with exact zero boundary values, but where the sequence $(V_n)_{n\in\mathbb N}$ of ansatz classes admits approximation rates in both $H^1(\Omega)$ and $L^2(\partial\Omega)$. More precisely, we assume that
there are real numbers $s\ge r > 0$ such that for every (sufficiently big) \(\lambda\) and every $n\in\mathbb{N}$ there is an element $v_n\in V_n$ satisfying
    \begin{equation*}
        \lVert v_n - u_{\lambda} \rVert_{H^1(\Omega)} \le c n^{-r} \quad \text{and} \quad \lVert v_n - u_{\lambda} \rVert_{L^2(\partial\Omega)}\le c n^{-s},
    \end{equation*}
    for some \(c>0\) independent on \(\lambda\).
Then the estimate in~\eqref{eq:estimate} yields
    \[ \lVert v_n^\ast - u^\ast \rVert_{H^1(\Omega)} \precsim \sqrt{n^{-2r} + n^{\sigma-2s}} + n^{-\sigma}. \]
The resulting rate of decay of the upper bound of the error is then 
    \[ \rho(\sigma) = \min\left( \frac12 \min(2r, 2s-\sigma), \sigma\right) = \min\left(r, s-\frac\sigma2, \sigma\right). \]
which is maximised at \(\sigma^\ast = 2s/3\) with a value of 
\begin{equation}\label{eq:rates2}
    \rho^\ast = \min\left(\frac23 s , r\right).
\end{equation}
In this case, the upper bound does not necessarily decay at the same rate as the approximation error, which decays with rate $r$. Note that because $H^1(\Omega)$ embeds into $L^2(\partial\Omega)$ we can assume without loss of generality that $s\ge r$. 

We made the assumption that the approximation rates of $r$ and $s$ holds with the same constant independently of $\lambda$. This is for example the case, if the solutions $u_\lambda$ are uniformly in $\lambda$ bounded in $H^s(\Omega)$ for some $s>1$. 

\paragraph{Approximation rates for $u^\ast$}
Now we want to discuss the case, where we weaken the approximation assumption from above, which is uniformly in $\lambda$. More precisely, we assume that there is a constant $c>0$ and elements $v_n\in V_n$ satisfying
    \begin{equation*}
        \lVert v_n - u^\ast \rVert_{H^1(\Omega)} \le c n^{-r} \quad \text{and} \quad \lVert v_n - u^\ast \rVert_{L^2(\partial\Omega)}\le c n^{-s}. 
    \end{equation*}
By \eqref{eq:estimateDifference} (or equivalently Theorem~\ref{thm:main} with $V=H^1(\Omega)$ and $v=u_\lambda$) and the triangle inequality we have
    \[ \lVert v_n - u_{\lambda} \rVert_{H^1(\Omega)} \le \lVert v_n - u^{\ast} \rVert_{H^1(\Omega)} + \lVert u^\ast - u_{\lambda} \rVert_{H^1(\Omega)} \le c n^{-r} + c' n^{-\sigma} \le \tilde c n^{-\tilde r} \]
and similarly
    \[ \lVert v_n - u_{\lambda} \rVert_{L^2(\partial\Omega)}\le c n^{-s} + c' n^{-\sigma} \le\tilde c n^{-\tilde s}, \]
where \(\tilde r = \min(r, \sigma)\) and \(\tilde s = \min(s, \sigma)\). Hence, we have reduced this case to the previous case and find that the right hand side of~\eqref{eq:estimate} decays at a rate of 
\begin{align}\label{eq:formulaRho}
    \begin{split}
        \rho(\sigma) & = \min(\min(r, \sigma), \min(s, \sigma) - \sigma/2, \sigma) = \min(r, \min(s, \sigma) - \sigma/2) \\
        & = \min(r, \min(s-\sigma/2, \sigma/2)) = \min(r, s-\sigma/2, \sigma/2). 
    \end{split}
\end{align}
This function is maximized at \(\sigma^\ast = s\) with a value of \(\rho^\ast=\min\left(s/2, r\right)\). Like before, we can without loss of generality assume that $s\ge r$. 
\begin{remark}
Note that in general the decay rate $\rho^\ast = \min\left(s/2, r\right)$ of the upper bound~\eqref{eq:estimate1} can be smaller than the approximation rate $r$. This is in contrast to problems with non-essential boundary values for which the error decays proportional to the approximation error by Cea's lemma. We stress that the defect in the decay rate of the right hand side of~\eqref{eq:estimate1} is not an artefact of our computations but in fact sharp.
\end{remark}

Let us now come back to the original problem of the Dirichlet problem~\eqref{equation:prototype}. For a right hand side $f\in H^r(\Omega)$, standard regularity results yield $u^\ast\in H^{r+2}(\Omega)$. Theorem~\ref{thm:quantUA} provides rates for the approximation in $H^s(\Omega)$ for $s\in[0, 1]$, which lead to the following result.

\begin{theorem}[Rates for NN training with boundary penalty]\label{thm:rates}
    Let $\Omega\subset\mathbb{R}^d$ be a bounded domain with $C^{r+1,1}$ boundary for some $r\in\mathbb N$, $f\in H^r(\Omega)$ and assume $A\in C^{r,1}(\Omega,\mathbb{R}^{d\times d})$ is symmetric, uniformly elliptic with ellipticity constant $\alpha>0$ and denote the solution to~\eqref{equation:prototype} by $u^\ast\in H^1_0(\Omega)$. For every \(n\in\mathbb N\), there is a ReLU network with parameter space \(\Theta_n\) of dimension \(\mathcal O(\log_2^2(n^{(r+2)/d})\cdot n)\) such that if \(\lambda_n\sim n^{\sigma}\) for \(\sigma=\frac{2r + 3}{2d}\) one has for any \(\rho <\frac{2r + 3}{4d}\) that 
    \begin{equation}\label{eq:absEstNNsI}
         \lVert u_{\theta_n} - u^\ast \rVert_{H^1(\Omega)} \precsim \sqrt{\delta_n + n^{-2\rho}} + n^{-\rho} \quad \text{for all } \theta_n\in\Theta_n,
    \end{equation}
    where \(\delta_n = E_{\lambda_n}(u_{\theta_n}) - \inf_{\tilde\theta\in\Theta_n} E_{\lambda_n}(u_{\tilde\theta})\).
\end{theorem}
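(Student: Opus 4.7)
The plan is to combine elliptic regularity for $u^\ast$ with the quantitative ReLU approximation Theorem~\ref{thm:quantUA} and the ``Approximation rates for $u^\ast$'' case discussed at the end of Section~\ref{sec:rates}. First, under the assumed $C^{r+1,1}$ regularity of $\partial\Omega$, $C^{r,1}$ regularity of $A$, and $f\in H^r(\Omega)$, standard $H^{r+2}$-regularity for divergence-form elliptic operators yields $u^\ast\in H^{r+2}(\Omega)$ with $\lVert u^\ast\rVert_{H^{r+2}(\Omega)}\precsim\lVert f\rVert_{H^r(\Omega)}$. I would apply Theorem~\ref{thm:quantUA} to $u^\ast$ with $k=r+2$ and $p=2$, producing for each $n\in\mathbb N$ a ReLU network $v_n$ with $\mathcal O(\log_2^2(n^{(r+2)/d})\cdot n)$ parameters and neurons such that for every $s\in[0,1]$
\[
\lVert v_n - u^\ast\rVert_{H^s(\Omega)}\le c(s)\lVert u^\ast\rVert_{H^{r+2}(\Omega)}\,n^{-(r+2-s)/d}.
\]
I would take $\Theta_n$ to be the parameter space of the fixed ReLU architecture (of the prescribed size) that realizes $v_n$.

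Next, I would extract from this single bound the two approximation rates demanded by the Section~\ref{sec:rates} analysis. The choice $s=1$ gives $\lVert v_n - u^\ast\rVert_{H^1(\Omega)}\precsim n^{-(r+1)/d}$. For the boundary term, the continuity of the trace operator $\operatorname{tr}\colon H^{1/2+\varepsilon}(\Omega)\to L^2(\partial\Omega)$ for a fixed $\varepsilon\in(0,1/2)$, together with the $s=1/2+\varepsilon$ case of the above bound, yields
\[
\lVert v_n - u^\ast\rVert_{L^2(\partial\Omega)}\precsim n^{-(r+3/2-\varepsilon)/d}.
\]
This places us exactly in the ``Approximation rates for $u^\ast$'' regime with approximation exponents $r_a=(r+1)/d$ and $s_a=(r+3/2-\varepsilon)/d$.

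Then I would plug these into the rate formula~\eqref{eq:formulaRho} at the prescribed $\sigma=(2r+3)/(2d)=(r+3/2)/d$. A direct computation gives $s_a-\sigma/2=(r+3/2-2\varepsilon)/(2d)$ and $\sigma/2=(r+3/2)/(2d)$, while $r_a=(r+1)/d>(r+3/2)/(2d)$ for $r\ge 0$, so the minimum is attained at the middle term and $\rho(\sigma)=(r+3/2-2\varepsilon)/(2d)$. The general estimate~\eqref{eq:generalEstimate} of Theorem~\ref{thm:main}, together with the triangle-inequality reduction that produced~\eqref{eq:formulaRho}, then gives for every $\theta_n\in\Theta_n$
\[
\lVert u_{\theta_n}-u^\ast\rVert_{H^1(\Omega)}\precsim\sqrt{\delta_n+n^{-2\rho(\sigma)}}+n^{-\rho(\sigma)},
\]
noting that the $n^{-\sigma}$ tail is absorbed into $n^{-\rho(\sigma)}$ since $\sigma>\rho(\sigma)$. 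Since $\varepsilon>0$ is arbitrary, this proves the claim for any $\rho<(2r+3)/(4d)$.

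The main obstacle I anticipate is not any of the combinatorial computations but the opening regularity step: because $A$ is only $C^{r,1}$ and $\partial\Omega$ only $C^{r+1,1}$, the appeal to $H^{r+2}$-regularity should cite a sharp version of the theorem for divergence-form operators under minimal coefficient smoothness rather than the smooth-coefficient statement. Everything beyond that is bookkeeping on top of the general estimate already in hand, with the single delicate point being the use of $s=1/2+\varepsilon$ (rather than $s=1/2$) so that the trace operator is bounded into $L^2(\partial\Omega)$; this is precisely what costs the strict inequality $\rho<(2r+3)/(4d)$ rather than equality in the final rate.
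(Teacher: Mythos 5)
Your proposal is correct and follows essentially the same route as the paper: elliptic regularity to get $u^\ast\in H^{r+2}(\Omega)$, Theorem~\ref{thm:quantUA} with $k=r+2$ evaluated at $s=1$ and $s=1/2+\varepsilon$ combined with the trace embedding $H^{1/2+\varepsilon}(\Omega)\hookrightarrow L^2(\partial\Omega)$, and then the rate formula~\eqref{eq:formulaRho} at $\sigma=(2r+3)/(2d)$, with the strict inequality $\rho<(2r+3)/(4d)$ coming from letting $\varepsilon\to 0$. Your exponents agree with the paper's ($\tilde r=(r+1)/d$, $\tilde s=(2r+3-2\varepsilon)/(2d)$, $\rho=(2r+3-4\varepsilon)/(4d)$), so no further comment is needed.
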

\begin{proof}
For \(\varepsilon>0\) it holds that \(H^{1/2+\varepsilon}(\Omega)\hookrightarrow L^2(\partial\Omega)\). Thus, Theorem \ref{thm:quantUA} guarantees the existence of $u_{\theta_n}$ with $\theta_n\in\Theta_n$ and the claimed number of parameters such that \(\lVert u_{\theta_n} - u^\ast \rVert_{H^1(\Omega)} \precsim n^{-(r+1)/d} = n^{-\tilde r}\) and 
    \[ \lVert u_{\theta_n} - u^\ast \rVert_{L^2(\partial\Omega)} \precsim \lVert u_{\theta_n} - u^\ast \rVert_{H^{1/2+\varepsilon}(\Omega)} \precsim n^{-(r+2-(1/2+\varepsilon))/d} = n^{-\tilde s}, \]
where \(\tilde s = (2r+3-2\varepsilon)/(2d)\). By~\eqref{eq:formulaRho}, the estimate~\eqref{eq:absEstNNsI} holds for
\begin{align*}
    \rho & = \min(\tilde r, \tilde s - \sigma/2, \sigma/2) = \min\left(\frac{r+1}{d}, \frac{2r+3-4\varepsilon}{4d}, \frac{2r+3}{4d}\right) = \frac{2r+3-4\varepsilon}{4d}.
\end{align*}
\end{proof}

\begin{remark}[Adaptation to Smoothness]\label{rem:adaptionToSmoothness2}
The discussion from Remark~\ref{rem:smoothness} carries over to the case of Dirichlet boundary values and boundary penalties, i.e., the error of the deep Ritz method decays at a rate increasing with the smoothness of the problem. This fact can be especially useful in high spatial dimensions, \jm{which is consistent with the empirical findings that the deep Ritz method can be effective in the numerical solution of high dimensional problems~\cite{weinan2018deep}}.
\jm{
Note that also finite element methods can achieve rates increasing with the smoothness of data, however they require the delicate construction of higher order elements.
}
\end{remark}

\jm{
\begin{remark}[Combination with different approximation results]
We focus on the ReLU activation in this section, whereas in practice often other architectures and activation functions are used, see~\cite{weinan2018deep, hennigh2021nvidia}. However, our results from Section 3 can handle arbitrary function classes and hence reduce the computation of error estimates to the computation of approximation bounds. Therefore, they can be combined with other approximation results for neural networks in Sobolev norm including the works of~\cite{guhring2020approximation, xu2020finite, siegel2020approximation, hon2021simultaneous, duan2021analysis,de2021approximation}. 
\end{remark}
}

\begin{remark}[The boundary penalty method for FEM]
The boundary penalty method has been applied in the context of finite element approximations \cite{babuvska1973finite} and studied in terms of its convergence rates in \cite{babuvska1973finite, shi1984convergence, barrett1986finite}. The idea of the finite element approach is analogue to the idea of using neural networks for the approximate solution of variational problems. However, one constructs a nested sequence of finite dimensional vectorspaces \(V_h\subseteq H^1(\Omega)\) arising from some triangulation with fineness \(h>0\) and computes the minimiser \(u_h\) of the penalized energy \(E_\lambda\) over \(V_h\). Choosing a suitable triangulation and piecewise affine linear elements and setting \(\lambda\sim h^{-1}\) one obtains the error estimate 
    \[ \lVert u_h - u^\ast \rVert_{H^1(\Omega)} \precsim h, \]
see \cite{shi1984convergence}. At the core of those estimates lies a linear version of C\'ea's Lemma, which already incorporates boundary values. However, the proof of this lemma heavily relies on the fact that the class of ansatz functions is linear and that its minimizer solves a linear equation. This is not  the case for non linear function classes like neural networks. Therefore, our estimates require a different strategy. However, the optimal rate of convergence for the boundary penalty method with finite elements can be deduced from our results. In fact, one can choose a suitable triangulation and an operator \(r_h \colon H^2(\Omega) \to V_h\) such that 
    \[ \lVert r_h u - u \rVert_{H^1(\Omega)} \precsim h \lVert u \rVert_{H^2(\Omega)},   \]
where the approximating functions $u_h$ have zero boundary values as they arise from interpolation. By the general discussion from above for the ansatz classes with  approximation rates with exact zero boundary values, choosing $\lambda\succsim h^{-1}$ yields an error bound decaying like the approximation error $\lVert u_h - u^\ast\rVert_{H^1(\Omega)} \precsim h$.
\end{remark}

\acks{The authors want to thank Luca Courte, Patrick Dondl and Stephan Wojtowytsch for their valuable comments. 
JM acknowledges support by the Evangelisches Studienwerk e.V. (Villigst), the International Max Planck Research School for Mathematics in the Sciences (IMPRS MiS) and the European Research Council (ERC) under the EuropeanUnion’s Horizon 2020 research and innovation programme (grant number 757983). MZ acknowledges support from BMBF within the e:Med program in the SyMBoD consortium (grant number 01ZX1910C) and the Research Council of Norway (grant number 303362).}

\bibliography{references}

\appendix

\section{Universal approximation in Sobolev Topology}

Theorem~\ref{thm:quantUA} is a restatement of the main result in~\cite{guhring2019error}. Since this formulation is not present in~\cite{guhring2019error}, we provide a short proof of it, which explains how it can be deduced from the results in~\cite{guhring2019error}.

\quantUA*
\begin{proof}
    The approximation results in \cite{guhring2019error} are stated for functions with the unit cube \([0, 1]^d\) as a domain. However, by scaling and possibly extending functions to the whole of \(\mathbb R^d\) this implies analogue results for functions defined on bounded Sobolev extension domains \(\Omega\).

    We examine the proof of \cite{guhring2019error} in order to see that the approximating network architectures do not depend on \(s\). Let us fix a function \(u\in W^{k, p}([0, 1]^d)\). In their notation, for \(M\in\mathbb N\), there are functions \((\phi_m)_{m=1, \dots, M^d}\) and polynomials \((p_m)_{m=1, \dots, M^d}\), such that
        \[ \left\lVert u - \sum \phi_m p_m \right\rVert_{W^{s, p}([0, 1]^d)} \precsim \lVert u \rVert_{W^{k, p}([0, 1]^d)} M^{-(k-s)} \]
    and a ReLU network function \(u_M\) with \(N\precsim M^d\log(M^k)\) parameters such that
    \[ \left\lVert \sum \phi_m p_m - u_M \right\rVert_{W^{s, p}([0, 1]^d)} \le c(s) \lVert u \rVert_{W^{k, p}(\Omega)} M^{-(k-s)}. \]
    This follows from the Lemma C.3, C.4 and Lemma C.6 in \cite{guhring2019error} with \(\varepsilon = M^{-k}\). Note that the functions and networks provided by those lemmata do not depend on \(s\), which is evident as the estimates are first shown for \(s = 0, 1\) and then generalised through interpolation. Now, by the triangle inequality, we have
    \[ \lVert u - u_M \rVert_{W^{s, p}([0, 1]^d)} \le \tilde c(s) \lVert u \rVert_{W^{k, p}([0, 1]^d)} M^{-(k-s)}. \]
    Now the claim follows by choosing \(M\sim n^{1/d}\). The additional square of the logarithm appears since they are considering networks with skip connections and those are then expressed as networks without skip connections, see also Corollary 4.2 in \cite{guhring2019error}.
\end{proof}
\end{document}